\newtheorem{assumption}{Assumption}
\newtheorem{corollary}{Corollary}
\newtheorem{lemma}{Lemma}
\newtheorem{definition}{Definition}
\newtheorem{remark}{Remark}
\newtheorem{theorem}{Theorem}
\tikzstyle{block} = [draw, fill=white, rectangle, 
\tikzstyle{circ} = [draw, fill=white, circle, minimum size=2.5em]
\tikzstyle{input} = [coordinate]
\tikzstyle{output} = [coordinate]
\tikzstyle{pinstyle} = [pin edge={to-,thin,black}]
\def\BibTeX{{\rm B\kern-.05em{\sc i\kern-.025em b}\kern-.08em
    T\kern-.1667em\lower.7ex\hbox{E}\kern-.125emX}}
\begin{document}
\title{Parameter Estimation in Adaptive Control of Time-Varying Systems Under a Range of Excitation Conditions}
\author{Joseph E. Gaudio, \IEEEmembership{Member, IEEE}, Anuradha M. Annaswamy, \IEEEmembership{Fellow, IEEE},\\Eugene Lavretsky, \IEEEmembership{Fellow, IEEE}, and Michael A. Bolender
\thanks{This work was supported by the Air Force Research Laboratory, Collaborative Research and Development for Innovative Aerospace Leadership (CRDInAL), Thrust 3 - Control Automation and Mechanization grant FA 8650-16-C-2642 and the Boeing Strategic University Initiative; cleared for release, case number 88ABW-2019-4571. \textit{(Corresponding author: Joseph E. Gaudio.)}}
\thanks{J.E. Gaudio and A.M. Annaswamy are with the Department of Mechanical Engineering, Massachusetts Institute of Technology, Cambridge, MA, 02139 USA (email: jegaudio@mit.edu, aanna@mit.edu).}
\thanks{E. Lavretsky is with The Boeing Company, Huntington Beach, CA, 92647 USA (email: eugene.lavretsky@boeing.com).}
\thanks{M.A. Bolender is with the Air Force Research Laboratory, WPAFB, OH, 45433 USA (email: michael.bolender@us.af.mil).}}

\maketitle

\begin{abstract}

This paper presents a new parameter estimation algorithm for the adaptive control of a class of time-varying plants. The main feature of this algorithm is a matrix of time-varying learning rates, which enables parameter estimation error trajectories to tend exponentially fast towards a compact set whenever excitation conditions are satisfied. This algorithm is employed in a large class of problems where unknown parameters are present and are time-varying. It is shown that this algorithm guarantees global boundedness of the state and parameter errors of the system, and avoids an often used filtering approach for constructing key regressor signals. In addition, intervals of time over which these errors tend exponentially fast toward a compact set are provided, both in the presence of finite and persistent excitation. A projection operator is used to ensure the boundedness of the learning rate matrix, as compared to a time-varying forgetting factor. Numerical simulations are provided to complement the theoretical analysis.

\end{abstract}

\begin{IEEEkeywords}
Adaptive control, time-varying learning rates, finite excitation, parameter convergence.
\end{IEEEkeywords}

\section{Introduction}
\label{s:introduction}

Adaptive control is a well established sub-field of control which compensates for parametric uncertainties that occur online so as to lead to regulation and tracking \cite{Narendra2005,Krstic_1995,Ioannou1996,Lavretsky2013}. This is accomplished by constructing estimates of the uncertainties in real-time and ensuring that the closed loop system is well behaved even when these uncertainties are learned imperfectly. Both in the adaptive control and system identification literature, numerous tools for ensuring that the parameter estimates converge to their true values have been derived over the past four decades \cite{Morgan_1977,Morgan_1977a,Anderson_1982,Ljung_1987}. While most of the current literature in these two topics makes an assumption that the unknown parameters are constants, the desired problem statement involves plants where the unknown parameters are varying with time. This paper proposes a new algorithm for such plants.

Parameter convergence in adaptive systems requires a necessary and sufficient condition, denoted as persistent excitation, which ensures that the convergence is uniform in time \cite{Boyd_1983,Boyd_1986,Narendra_1987}. If instead, a weaker condition is enforced where the excitation holds only over a finite interval, then parameter errors decrease only over a finite time. It is therefore of interest to achieve a fast rate of convergence by leveraging any excitation that may be available so that the parameter estimation error remains as small as possible, even in the presence of time-variations. The algorithm proposed in this paper will be shown to lead to such a fast convergence under a range of excitation conditions.

The underlying structure in many of the adaptive identification and control problems consists of a  linear regression relation between two dominant errors in the system \cite{Ioannou1996,Aranovskiy_2016}. Examples include adaptive observers \cite{Lion_1967,Carroll_1973,Luders_1973,Kreisselmeier_1977,Jenkins_2019} and certain classes of adaptive controllers \cite{Narendra2005}. The underlying algebraic relation is often leveraged in order to lead to a fast convergence through the introduction of a time-varying learning rate in the parameter estimation algorithm, which leads to the well-known recursive least squares algorithm \cite{Gauss_1825}. Together with the use of an outer product of the underlying regressor, a matrix of time-varying learning rates is often adjusted to enable fast convergence \cite{Slotine_1989,Li_1990,Slotine_1991,Lion_1967,Kreisselmeier_1977,Goodwin_1987,Krstic_1995a}. In many cases, however, additional dynamics are present in the underlying error model that relates the two dominant errors, which prevents the derivation of the corresponding algorithm and therefore a fast convergence of the parameter estimates. To overcome this roadblock, filtering has been proposed in the literature \cite{Slotine_1989,Li_1990,Slotine_1991,Roy_2018,Cho_2018,Duarte_1989,Duarte_1989a,Lavretsky_2009}. This in turn leads to an algebraic regression, using which corresponding adaptive algorithms are derived in \cite{Slotine_1989,Li_1990,Slotine_1991,Lion_1967,Kreisselmeier_1977,Goodwin_1987,Krstic_1995a} with time-varying learning rates become applicable. In \cite{Roy_2018,Cho_2018}, it is shown that parameter convergence can occur even with finite excitation for a class of adaptive control architectures considered in \cite{Duarte_1989,Duarte_1989a,Lavretsky_2009}. In all of these papers, the underlying unknown parameters are assumed to be constants. The disadvantage of such a filtering approach is that the convergence properties cannot be easily extended to the case when the unknown parameters are time-varying, as the filtering renders the problem intractable. The algorithm that we propose in this paper introduces no filtering of system dynamics, and is directly applied to the original error model with the dynamics intact. As such, we are able to establish conditions for fast decreases of errors, even in the presence of time-varying parameters under varied properties of excitation.

Adaptive control in the presence of time-varying parameters has been studied in \cite{Tsakalis_1987,Tsakalis_1989} using the $\sigma$-modification \cite{Ioannou_1984}, and shown to lead to global boundedness. More recently, this problem has also been studied in \cite{Chowdhary_2013,Chowdhary_2014} using concurrent learning, but with the assumption that the state derivatives from previous time instances are available. While \cite{Parikh_2018} relaxes this assumption, it is at the expense of restricting the parameters to be a constant. In contrast to \cite{Tsakalis_1987,Tsakalis_1989}, our paper can be shown in certain cases to result in convergence to a smaller compact set. In contrast to \cite{Chowdhary_2013,Chowdhary_2014,Parikh_2018}, our paper does not require knowledge of state derivatives and considers time-varying unknown parameters. References \cite{Ge_2003,Chen_2020} further consider adaptive control for plants with a specific structure leveraging Nussbaum gains \cite{Ge_2003} and additional additive terms in the update law \cite{Chen_2020}, while our proposed algorithm does not require such potentially high gain terms.

This paper focuses on the ultimate goal of all adaptive control and identification problems, which is to provide a tractable parameter estimation algorithm for problems where the unknown parameters are time-varying. We will derive such an algorithm that guarantees, in the presence of time-varying parameters, 1) exponentially fast tending of parameter errors and tracking errors to a compact set for a range of excitation conditions, 2) does not require filtering of system dynamics, and 3) is applicable to a large class of adaptive systems. An error model approach as in \cite{Narendra2005,Narendra_1985,Loh_1999} is adopted to describe the underlying class. The algorithm consists of time-varying learning rates in order to guarantee fast parameter convergence. Rather than use a forgetting factor, automatic corrections are provided by the algorithm to ensure that the learning rates are bounded. Additionally, fewer number of integrations are required to implement the algorithm as compared to the existing literature \cite{Roy_2018,Cho_2018,Chowdhary_2013,Chowdhary_2014,Parikh_2018}.

This paper proceeds as follows: Section \ref{s:Preliminaries} presents mathematical preliminaries regarding continuous projection-based operators and definitions of persistent and finite excitation. The underlying problem is introduced in Section \ref{s:Problem_Formulation}. The main algorithm with time-varying learning rates is presented in Section \ref{s:Design}. Stability and convergence properties of this algorithm are established for a range of excitation conditions in Section \ref{s:Stability}. Numerical simulations follow in Section \ref{s:Comparison}. Concluding remarks are provided in Section \ref{s:Conclusion}.

\section{Preliminaries}
\label{s:Preliminaries}

In this paper we use $\lVert\cdot\rVert$ to represent the 2-norm. Definitions, key lemmas, and properties of the Projection Operator (c.f. \cite{Bertsekas_2009,Praly_1991,Pomet_1992,Ioannou1996,Gibson_2012,Lavretsky2013}) are all presented in this section. Proofs of all lemmas can be found in the appendix, and omitted where it is straightforward. Core variables employed throughout are listed in Table \ref{t:Nomenclature} in the appendix.

We begin with a few definitions and properties of convex sets and convex, coercive functions.
\begin{definition}[\hspace{1sp}\cite{Bertsekas_2009}]\label{d:Convex_Set}
    A set $E\subset\mathbb{R}^N$ is convex if $\lambda x+(1-\lambda)y\in E$ for all $x\in E$, $y\in E$, and $0\leq\lambda\leq1$.
\end{definition}
\begin{definition}[\hspace{1sp}\cite{Bertsekas_2009}]\label{d:Convex_Function}
    A function $f:\mathbb{R}^N\rightarrow\mathbb{R}$ is convex if\\$f(\lambda x+(1-\lambda y))\leq\lambda f(x)+(1-\lambda)f(y)$ for all $0\leq\lambda\leq1$.
\end{definition}
\begin{definition}[\hspace{1sp}\cite{Bertsekas_2009}]\label{d:Coercive}
    A function $f(x):\mathbb{R}^N\rightarrow\mathbb{R}$ is said to be coercive if for all sequences $\{x_k\}$, $k\in\mathbb{N}$ with $\lVert x_k\rVert\rightarrow\infty$ then $\lim_{k\rightarrow\infty}f(x_k)=\infty$.
\end{definition}
\begin{lemma}[\hspace{1sp}\cite{Gibson_2012}]\label{l:Xi}
    For a convex function $f(x):\mathbb{R}^N\rightarrow\mathbb{R}$ and any constant $\delta>0$, the subset $\Xi_{\delta}=\{x\in\mathbb{R}^N\mid f(x)\leq\delta\}$ is convex.
\end{lemma}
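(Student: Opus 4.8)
The plan is to verify the defining property of a convex set (Definition \ref{d:Convex_Set}) directly from the defining inequality of a convex function (Definition \ref{d:Convex_Function}), without invoking any machinery beyond these two definitions. Concretely, I would fix two arbitrary points $x,y\in\Xi_\delta$ and an arbitrary scalar $\lambda\in[0,1]$, and aim to establish that the convex combination $z:=\lambda x+(1-\lambda)y$ again lies in $\Xi_\delta$, i.e.\ that $f(z)\leq\delta$. Showing this for all such $x$, $y$, and $\lambda$ is exactly the content of Definition \ref{d:Convex_Set} applied to the set $\Xi_\delta$.

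The single key step is to chain the convexity of $f$ together with the membership hypotheses. Applying Definition \ref{d:Convex_Function} to $z$ gives $f(\lambda x+(1-\lambda)y)\leq\lambda f(x)+(1-\lambda)f(y)$. Since $x,y\in\Xi_\delta$, by the definition of the sublevel set we have $f(x)\leq\delta$ and $f(y)\leq\delta$, and because $\lambda\geq0$ and $1-\lambda\geq0$ these two inequalities may be scaled and added to yield $\lambda f(x)+(1-\lambda)f(y)\leq\lambda\delta+(1-\lambda)\delta=\delta$. Combining the two displayed bounds gives $f(z)\leq\delta$, so $z\in\Xi_\delta$, which is what was required.

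Honestly, there is no genuine obstacle here: the statement is the textbook fact that sublevel sets of convex functions are convex, and the proof is a two-line substitution once the definitions are aligned. The only points warranting a moment's care are bookkeeping ones, namely that the nonnegativity of both coefficients $\lambda$ and $1-\lambda$ is what licenses preserving the inequality direction when combining $f(x)\leq\delta$ and $f(y)\leq\delta$, and that the argument in fact never uses the sign of $\delta$, so the hypothesis $\delta>0$ is stronger than needed and the conclusion holds for every real threshold. I would therefore present the proof as a direct computation and explicitly cite Definitions \ref{d:Convex_Set} and \ref{d:Convex_Function} at the two places where they are invoked.
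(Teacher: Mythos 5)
Your proof is correct and is essentially identical to the paper's: both fix $x,y\in\Xi_\delta$ and $\lambda\in[0,1]$, apply Definition \ref{d:Convex_Function} to bound $f(\lambda x+(1-\lambda)y)\leq\lambda f(x)+(1-\lambda)f(y)\leq\delta$, and conclude via Definition \ref{d:Convex_Set}. Your side remark that $\delta>0$ is never used is accurate as well.
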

\begin{lemma}\label{l:bounded_sublevel}
    For a coercive function $f(x):\mathbb{R}^N\rightarrow\mathbb{R}$ and any constant $\delta>0$, any nonempty subset $\Xi_{\delta}=\{x\in\mathbb{R}^N\mid f(x)\leq\delta\}$ is bounded.
\end{lemma}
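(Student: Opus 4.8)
The plan is to argue by contradiction, exploiting the fact that the definition of coercivity is phrased in terms of \emph{every} sequence whose norm diverges. So I would begin by assuming, for contradiction, that the nonempty sublevel set $\Xi_{\delta}=\{x\in\mathbb{R}^N\mid f(x)\leq\delta\}$ is unbounded. Unboundedness of a subset of $\mathbb{R}^N$ means that for every $k\in\mathbb{N}$ there exists a point of the set whose norm exceeds $k$; this lets me select a sequence $\{x_k\}$ with $x_k\in\Xi_{\delta}$ for all $k$ and $\lVert x_k\rVert\to\infty$ as $k\to\infty$.

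The next step is to play the two defining properties against each other. On one hand, because each $x_k$ lies in $\Xi_{\delta}$, membership in the sublevel set gives the uniform upper bound $f(x_k)\leq\delta$ for all $k$. On the other hand, since $\lVert x_k\rVert\to\infty$, Definition \ref{d:Coercive} applies to this particular sequence and yields $\lim_{k\to\infty}f(x_k)=\infty$. These two facts are incompatible: a real sequence that diverges to $+\infty$ cannot be bounded above by the finite constant $\delta$. This contradiction shows the assumption of unboundedness was false, so $\Xi_{\delta}$ must be bounded.

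I do not expect a serious obstacle here; the argument is essentially a one-line contradiction once the diverging sequence is produced. The only point requiring care is the very first step, namely extracting the sequence $\{x_k\}$ with $\lVert x_k\rVert\to\infty$ from the mere hypothesis that the set is unbounded, and confirming that this is exactly the kind of sequence to which the coercivity definition is entitled to be applied. The nonemptiness hypothesis plays no essential role in the contradiction itself (an empty set is trivially bounded), but it guarantees that the statement is not vacuous.
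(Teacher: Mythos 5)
Your proof is correct and follows essentially the same route as the paper's: assume $\Xi_{\delta}$ is unbounded, extract a sequence $\{x_k\}\subset\Xi_{\delta}$ with $\lVert x_k\rVert\to\infty$, and contradict $f(x_k)\leq\delta$ via coercivity. Your added remarks on how unboundedness yields the diverging sequence and on the role of nonemptiness are sound refinements of the same argument.
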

\begin{corollary}\label{cor:coercive_convex}
    For a coercive, convex function $f(x):\mathbb{R}^N\rightarrow\mathbb{R}$ and a constant $\delta>0$, any nonempty subset $\Xi_{\delta}=\{x\in\mathbb{R}^N\mid f(x)\leq\delta\}$ is convex and bounded.
\end{corollary}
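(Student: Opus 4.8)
The plan is to observe that Corollary~\ref{cor:coercive_convex} is a direct consequence of the two preceding lemmas, since a coercive, convex function is in particular both coercive and convex, and the set $\Xi_{\delta}$ in question is exactly the $\delta$-sublevel set to which both lemmas apply. So I would not attempt any new argument from scratch; instead I would invoke Lemma~\ref{l:Xi} and Lemma~\ref{l:bounded_sublevel} in turn.

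Concretely, first I would note that the hypothesis supplies a function $f$ that is simultaneously convex and coercive, and a constant $\delta>0$, with $\Xi_{\delta}=\{x\in\mathbb{R}^N\mid f(x)\leq\delta\}$ assumed nonempty. Since $f$ is convex, Lemma~\ref{l:Xi} applies directly and yields that $\Xi_{\delta}$ is convex. Since $f$ is coercive and $\Xi_{\delta}$ is nonempty, Lemma~\ref{l:bounded_sublevel} applies directly and yields that $\Xi_{\delta}$ is bounded. The conclusion is then just the conjunction of these two statements, namely that $\Xi_{\delta}$ is convex and bounded.

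The only point requiring any care is to confirm that the hypotheses of each lemma are genuinely met, rather than stronger or weaker versions: Lemma~\ref{l:Xi} needs only convexity of $f$ (and any $\delta>0$), while Lemma~\ref{l:bounded_sublevel} needs only coercivity of $f$ together with nonemptiness of the sublevel set. Both are implied by the corollary's hypotheses, so there is no gap to close. I anticipate no real obstacle here; the result is a bookkeeping combination, and the proof should be a single short sentence citing the two lemmas. The substantive content lives entirely in Lemma~\ref{l:bounded_sublevel}, whose own proof would be where any genuine argument (e.g., a contradiction using a divergent sequence in $\Xi_{\delta}$ against the definition of coercivity) is carried out.
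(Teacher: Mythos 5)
Your proposal is correct and matches the paper's (implicit) argument exactly: the paper states this as a corollary precisely because it is the conjunction of Lemma~\ref{l:Xi} (convexity of $\Xi_{\delta}$ from convexity of $f$) and Lemma~\ref{l:bounded_sublevel} (boundedness of the nonempty $\Xi_{\delta}$ from coercivity of $f$), and omits a written proof as straightforward. Your check that each lemma's hypotheses are individually satisfied is the right and only point of care here.
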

\begin{remark}\label{r:Function_of_Matrix}
    Definitions \ref{d:Convex_Set}, \ref{d:Convex_Function}, \ref{d:Coercive}, Lemmas \ref{l:Xi}, \ref{l:bounded_sublevel}, and Corollary \ref{cor:coercive_convex} hold by simple extension to functions of matrices i.e., $f:\mathbb{R}^{N\times N}\rightarrow\mathbb{R}$.
\end{remark}
\begin{lemma}[\hspace{1sp}\cite{Gibson_2012}]\label{l:theta_star_theta_b}
    For a continuously differentiable convex function $f(x):\mathbb{R}^N\rightarrow\mathbb{R}$ and any constant $\delta>0$, let $\theta^*$ be an interior point of the subset $\Xi_{\delta}=\{x\in\mathbb{R}^N\mid f(x)\leq\delta\}$, i.e. $f(\theta^*)<\delta$, and let a boundary point $\theta$ be such that $f(\theta)=\delta$. Then $(\theta-\theta^*)^T\nabla f(\theta)\geq0$.
\end{lemma}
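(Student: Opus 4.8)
The plan is to invoke the first-order (gradient) characterization of convexity, namely the inequality $f(y) \geq f(\theta) + \nabla f(\theta)^T(y - \theta)$ valid for all $y \in \mathbb{R}^N$, and then specialize it to $y = \theta^*$. Since Definition \ref{d:Convex_Function} supplies only the chord definition of convexity, I would first establish this gradient inequality as an intermediate step, since it is the engine of the whole argument.

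To derive the gradient inequality, I would apply convexity along the segment joining $\theta$ and an arbitrary point $y$. For any $\lambda \in (0,1]$, convexity gives $f(\theta + \lambda(y - \theta)) \leq f(\theta) + \lambda\bigl(f(y) - f(\theta)\bigr)$. Rearranging and dividing by $\lambda$ yields $\frac{f(\theta + \lambda(y - \theta)) - f(\theta)}{\lambda} \leq f(y) - f(\theta)$. Letting $\lambda \to 0^+$ and using continuous differentiability of $f$, the left-hand side converges to the directional derivative $\nabla f(\theta)^T(y - \theta)$, which establishes $f(y) \geq f(\theta) + \nabla f(\theta)^T(y - \theta)$.

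With the gradient inequality in hand, the conclusion follows from a short chain. Setting $y = \theta^*$ gives $f(\theta^*) \geq f(\theta) + \nabla f(\theta)^T(\theta^* - \theta) = \delta + \nabla f(\theta)^T(\theta^* - \theta)$, where I have used the boundary condition $f(\theta) = \delta$. Because $\theta^*$ is an interior point, $f(\theta^*) < \delta$, so $\delta > \delta + \nabla f(\theta)^T(\theta^* - \theta)$, which forces $\nabla f(\theta)^T(\theta^* - \theta) < 0$, i.e. $(\theta - \theta^*)^T \nabla f(\theta) > 0 \geq 0$.

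I do not anticipate a genuine obstacle. The only point requiring care is the limit in deriving the gradient inequality, where continuous differentiability guarantees that the difference quotient converges to the directional derivative; if one instead takes the gradient inequality as the standard known fact cited in \cite{Gibson_2012}, the proof reduces to the single chain of inequalities above. It is worth noting that the interior condition $f(\theta^*) < \delta$ in fact delivers the strict inequality, which is stronger than the stated $\geq 0$.
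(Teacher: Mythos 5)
Your proof is correct and takes essentially the same route as the paper's: both derive the first-order convexity inequality $f(y)\geq f(\theta)+\nabla f(\theta)^{T}(y-\theta)$ from the chord definition via the difference quotient as $\lambda\rightarrow 0^{+}$, then substitute $y=\theta^*$ using $f(\theta)=\delta$ and the bound on $f(\theta^*)$. Your use of the strict interior condition $f(\theta^*)<\delta$ even yields the strict conclusion $(\theta-\theta^*)^{T}\nabla f(\theta)>0$, a slight strengthening over the paper, which only invokes $f(\theta^*)\leq\delta$ to conclude $\geq 0$.
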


We now present properties related to projection operators. While some of these properties have been previously reported (c.f. \cite{Praly_1991,Pomet_1992,Ioannou1996,Gibson_2012,Lavretsky2013}), they are included here for the sake of completeness and to help discuss the main result of this paper.
\begin{definition}[\hspace{1sp}\cite{Gibson_2012}]\label{d:Projection_Matrix_Gamma}
    The $\Gamma$-projection operator for general matrices is defined as,
    \begin{equation}\label{e:Projection_Matrix_Gamma}
        \text{Proj}_{\Gamma}(\theta,Y,F)=
        \begin{bmatrix}
        \text{Proj}_{\Gamma}(\theta_1,y_1,f_1)&\hspace{-.15cm}\cdots\hspace{-.15cm}&\text{Proj}_{\Gamma}(\theta_m,y_m,f_m)
        \end{bmatrix}
    \end{equation}
where $\theta=[\theta_1,\ldots,\theta_m]\in\mathbb{R}^{N\times m}$, $Y=[y_1,\ldots,y_m]\in\mathbb{R}^{N\times m}$, $F(\theta)=[f_1,\ldots,f_m]^T:\mathbb{R}^{N\times m}\rightarrow\mathbb{R}^{m}$, $f_j:\mathbb{R}^N\rightarrow\mathbb{R}$ are convex continuously differentiable functions, $0<\Gamma=\Gamma^T\in\mathbb{R}^{N\times N}$ is a symmetric positive definite matrix and $\forall j\in1,\ldots,m$,
    \begin{align}\label{e:Projection_Vector_Gamma}
        \begin{split}
            &\text{Proj}_{\Gamma}(\theta_j,y_j,f_j)=\left\{\begin{array}{ll}
            \hspace{-.15cm}\Gamma y_j-\Gamma\frac{\nabla f_j(\theta_j)(\nabla f_j(\theta_j))^T}{(\nabla f_j(\theta_j))^T\Gamma\nabla f_j(\theta_j)}\Gamma y_jf_j(\theta_j), \\
            & \hspace{-4.75cm}f_j(\theta_j)>0\wedge y_j^T\Gamma\nabla f_j(\theta_j)>0\\
            \hspace{-.15cm}\Gamma y_j, & \hspace{-4.75cm}\text{otherwise}
            \end{array}\right.
        \end{split}
    \end{align}
\end{definition}
\begin{definition}\label{d:Projection_Matrix_PD}
    The projection operator for positive definite matrices is defined as,
    \begin{equation}\label{e:Projection_Matrix_PD}
        \text{Proj}(\Gamma,\mathcal{Y},\mathcal{F})=\left\{\begin{array}{ll}
        \hspace{-.21cm}\mathcal{Y}-\mathcal{F}(\Gamma)\mathcal{Y}, & \hspace{-.28cm}\mathcal{F}(\Gamma)>0\wedge Tr\left[\mathcal{Y}^T\nabla \mathcal{F}(\Gamma)\right]\hspace{-.02cm}>\hspace{-.02cm}0\\
        \hspace{-.21cm}\mathcal{Y}, & \hspace{-.28cm}\text{otherwise}
        \end{array}\right.
    \end{equation}
where $0<\Gamma=\Gamma^T\in\mathbb{R}^{N\times N}$, $\mathcal{Y}\in\mathbb{R}^{N\times N}$ and $\mathcal{F}:\mathbb{R}^{N\times N}\rightarrow\mathbb{R}$ is a convex continuously differentiable function.
\end{definition}
\begin{remark}\label{r:rho}
    The projection operator in \eqref{e:Projection_Matrix_PD} may be expressed more compactly as $\text{Proj}(\Gamma,\mathcal{Y},\mathcal{F})=\rho(t)\mathcal{Y}$ where
    \begin{equation}\label{e:rho}
        \rho(t)=\left\{\begin{array}{ll}
        \hspace{-.15cm}(1-\mathcal{F}(\Gamma)), & \mathcal{F}(\Gamma)>0\wedge Tr\left[\mathcal{Y}^T\nabla \mathcal{F}(\Gamma)\right]>0\\
        \hspace{-.15cm}1, & \text{otherwise}
        \end{array}\right.
    \end{equation}
    From \eqref{e:Projection_Matrix_PD}, \eqref{e:rho}, and as displayed in Figure \ref{f:Projection}, it can be seen that $\rho(t)=1$ on the inside of the projection boundary and $\rho(t)=0$ on the outside edge of the boundary if $Tr\left[\mathcal{Y}^T\nabla \mathcal{F}(\Gamma)\right]>0$.
\end{remark}
\begin{remark}
    An example of a coercive, continuously differentiable convex function commonly used in projection for adaptive control is given by \cite{Gibson_2012}
    \begin{equation}\label{e:Projection_AC}
        f(\theta)=\frac{\lVert \theta\rVert^2-\theta^{*2}_{max}}{2\varepsilon\theta^*_{max}+\varepsilon^2},
    \end{equation}
    where $\theta^*_{max}$ and $\varepsilon$ are positive scalars. It is easy to see that $f(\theta)=0$ when $\lVert \theta\rVert=\theta^*_{max}$ and $f(\theta)=1$ when $\lVert \theta\rVert=\theta^*_{max}+\varepsilon$. This function is commonly used in a projection-based parameter update law to result in a bounded parameter estimate (proven in this paper in Lemma \ref{l:theta_t}). It should be noted that numerous choices other than the one in \eqref{e:Projection_AC} exist for $f$.
\end{remark}
\begin{lemma}\label{l:Trace_theta_tilde_Proj_Y}
    Let $\theta=[\theta_1,\ldots,\theta_m]\in\mathbb{R}^{N\times m}$, $\theta^*=[\theta_1^*,\ldots,\theta_m^*]\in\mathbb{R}^{N\times m}$, $Y=[y_1,\ldots,y_m]\in\mathbb{R}^{N\times m}$, $F(\theta)=[f_1,\ldots,f_m]^T:\mathbb{R}^{N\times m}\rightarrow\mathbb{R}^{m}$, where $f_j:\mathbb{R}^N\rightarrow\mathbb{R}$ are convex continuously differentiable functions, $0<\Gamma=\Gamma^T\in\mathbb{R}^{N\times N}$ is a symmetric positive definite matrix, and $\theta_j^*\in\Xi_{0,j}=\{\theta_j^*\in\mathbb{R}^N\mid f_j(\theta_j^*)\leq0\}$ $\forall j\in1,\ldots,m$, then
    \begin{equation*}
        Tr\left[(\theta-\theta^*)^T\Gamma^{-1}\left(\text{Proj}_{\Gamma}(\theta,Y,F)-\Gamma Y\right)\right]\leq0.
    \end{equation*}
\end{lemma}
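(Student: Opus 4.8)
The plan is to reduce the matrix (trace) statement to a collection of scalar inequalities, one per column, and then exploit convexity of each $f_j$ together with the two-case structure of the projection operator in \eqref{e:Projection_Vector_Gamma}. First I would use the columnwise definition of the $\Gamma$-projection in \eqref{e:Projection_Matrix_Gamma} and the identity $Tr[A^T B]=\sum_{j=1}^m a_j^T b_j$ for the columns $a_j,b_j$ to rewrite the left-hand side as $\sum_{j=1}^m (\theta_j-\theta_j^*)^T\Gamma^{-1}\bigl(\text{Proj}_{\Gamma}(\theta_j,y_j,f_j)-\Gamma y_j\bigr)$. It therefore suffices to show that each summand is nonpositive, and the whole argument becomes a scalar, per-column computation.

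Next I would split on the two branches of \eqref{e:Projection_Vector_Gamma}. On the \emph{otherwise} branch one has $\text{Proj}_{\Gamma}(\theta_j,y_j,f_j)=\Gamma y_j$, so the corresponding summand is identically zero. On the active branch, where $f_j(\theta_j)>0$ and $y_j^T\Gamma\nabla f_j(\theta_j)>0$, I would substitute the first line of \eqref{e:Projection_Vector_Gamma}; the factors $\Gamma^{-1}$ and $\Gamma$ cancel, and writing $g=\nabla f_j(\theta_j)$ the summand collapses to $-\frac{[(\theta_j-\theta_j^*)^T g]\,[g^T\Gamma y_j]\,f_j(\theta_j)}{g^T\Gamma g}$. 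I would then sign each factor: the denominator satisfies $g^T\Gamma g>0$ because $\Gamma>0$ and $g\neq0$ (the activation condition $y_j^T\Gamma g>0$ forces $g\neq0$), while $g^T\Gamma y_j>0$ and $f_j(\theta_j)>0$ are exactly the branch conditions, using $\Gamma=\Gamma^T$ to identify $y_j^T\Gamma g$ with $g^T\Gamma y_j$.

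The only remaining factor is $(\theta_j-\theta_j^*)^T g=(\theta_j-\theta_j^*)^T\nabla f_j(\theta_j)$, and establishing its nonnegativity is the crux of the proof. Here I would invoke the hypothesis $\theta_j^*\in\Xi_{0,j}$, i.e. $f_j(\theta_j^*)\leq0$, together with $f_j(\theta_j)>0$ on the active branch. Setting $\delta=f_j(\theta_j)>0$, the point $\theta_j^*$ obeys $f_j(\theta_j^*)\leq0<\delta$ while $f_j(\theta_j)=\delta$, so Lemma \ref{l:theta_star_theta_b} gives $(\theta_j-\theta_j^*)^T\nabla f_j(\theta_j)\geq0$ directly; equivalently, the first-order convexity inequality $f_j(\theta_j^*)\geq f_j(\theta_j)+\nabla f_j(\theta_j)^T(\theta_j^*-\theta_j)$ yields $(\theta_j-\theta_j^*)^T\nabla f_j(\theta_j)\geq f_j(\theta_j)-f_j(\theta_j^*)>0$. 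With all four factors signed, the active-branch summand is a nonnegative quantity multiplied by $-1$ and divided by a positive one, hence $\leq0$; summing the per-column bounds recovers the claimed trace inequality. I expect this convexity step to be the main obstacle, since it is the only place the geometric hypothesis $\theta_j^*\in\Xi_{0,j}$ actually enters, whereas the rest is bookkeeping with the trace decomposition and the cancellation of $\Gamma$.
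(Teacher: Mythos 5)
Your proposal is correct and follows essentially the same route as the paper: decompose the trace column-by-column, note the \emph{otherwise} branch contributes zero, cancel $\Gamma^{-1}\Gamma$ on the active branch, and sign the resulting factor $-\frac{[(\theta_j-\theta_j^*)^T\nabla f_j(\theta_j)][(\nabla f_j(\theta_j))^T\Gamma y_j]f_j(\theta_j)}{(\nabla f_j(\theta_j))^T\Gamma\nabla f_j(\theta_j)}$ using Lemma \ref{l:theta_star_theta_b} for the key factor $(\theta_j-\theta_j^*)^T\nabla f_j(\theta_j)\geq0$. Your application of that lemma with $\delta=f_j(\theta_j)>0$ (so that $f_j(\theta_j^*)\leq0<\delta$ makes $\theta_j^*$ interior) is exactly how the paper uses it, and your explicit observations that the branch condition forces $\nabla f_j(\theta_j)\neq0$ and that $\Gamma=\Gamma^T$ identifies $y_j^T\Gamma\nabla f_j(\theta_j)$ with $(\nabla f_j(\theta_j))^T\Gamma y_j$ are details the paper leaves implicit.
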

\begin{figure}[t]
	\centering
    \begin{tikzpicture}[auto, node distance=1.5cm,>=latex']
    
    \node [input, name=center] {};
    \filldraw[fill=black!15!white, draw=black] (center) ellipse (1.5cm and 1.5cm);
    \filldraw[fill=black!5!white, draw=black] (center) ellipse (1.5*0.75cm and 1.5*0.75cm);
    
    \node [input, name=x1, right=1.25cm of center] {};
    \node [input, name=x1y1, above=0.829156197588850cm of x1] {};
    \node [input, name=x2, right=1.75*1.25cm of center] {};
    \node [input, name=x2y2, above=1.75*0.829156197588850cm of x2] {};
    \draw [->, thick] (x1y1) -- (x2y2);
    \node [input, name=nabla, right=0.4cm of x2y2] {};
    \node[] at (nabla) {$\nabla f(\theta)$};
    
    \node [input, name=x3, right=0.1cm of x1y1] {};
    \node [input, name=x3y3, above=1.5cm of x3] {};
    \draw [->, thick] (x1y1) -- (x3y3);
    \node [input, name=y, right=0.15cm of x3y3] {};
    \node[] at (y) {$y$};
    
    \node [input, name=x4, left=0.75cm of x1y1] {};
    \node [input, name=x4y4, above=0.75*1.25*1.206045378311055cm of x4] {};
    \draw [->, thick] (x1y1) -- (x4y4);
    \node [input, name=Proj, left=0.85cm of x4y4] {};
    \node[] at (Proj) {$\text{Proj}(\theta,y,f)$};
    
    \filldraw[fill=black, draw=black] (x1y1) circle (0.05cm);
    
    \node [input, name=theta_circ, above left=0.45cm of center] {};
    \filldraw[fill=black, draw=black] (theta_circ) circle (0.05cm);
    \node[above left=0.3cm of center] {$\theta^*$};
    
    \node [input, name=t1x, right=2cm of theta_circ] {};
    \node [input, name=t1y, above=1cm of t1x] {};
    \node [input, name=t2y, below=2cm of theta_circ] {};
    \node [input, name=t2x, left=2cm of t2y] {};
    \node [input, name=t3x, right=0.75cm of theta_circ] {};
    \node [input, name=t3y, below=0.5cm of t3x] {};
    \draw (theta_circ) .. controls (t1y) and (t2x) .. (t3y);
    
    \node [input, name=x_Xi, left=3cm of center] {};
    \node [input, name=x_Xi_y1, above=0.5cm of x_Xi] {};
    \node [input, name=x_Xi_y2, below=0.5cm of x_Xi] {};
    \node[] at (x_Xi_y1) {$\{\theta\mid f(\theta)=1\}$};
    \node[] at (x_Xi_y2) {$\{\theta\mid f(\theta)=0\}$};
    \node [input, name=x_Xi_arrow1, left=2cm of center] {};
    \node [input, name=x_Xi_y1_arrow1, above=0.5cm of x_Xi_arrow1] {};
    \node [input, name=x_Xi_y2_arrow1, below=0.5cm of x_Xi_arrow1] {};
    \node [input, name=x_Xi_arrow2, left=1.414213562373095cm of center] {};
    \node [input, name=x_Xi_y1_arrow2, above=0.5cm of x_Xi_arrow2] {};
    \draw [->, very thin] (x_Xi_y1_arrow1) -- (x_Xi_y1_arrow2);
    \node [input, name=x_Xi_arrow3, left=1.007782218537319cm of center] {};
    \node [input, name=x_Xi_y2_arrow2, below=0.5cm of x_Xi_arrow3] {};
    \draw [->, very thin] (x_Xi_y2_arrow1) -- (x_Xi_y2_arrow2);
    
    \node [input, name=y_Xi1, below=0.85cm of center] {};
    \node[] at (y_Xi1) {$\Xi_{0}$};
    
    \end{tikzpicture}
	\caption{(adapted from \cite{Gibson_2012}) $\Gamma$-Projection operator in $\mathbb{R}^2$ with $\Gamma=I$. Uncertain parameter $\theta^*(t)\in\Xi_{0}=\{\theta^*\in\mathbb{R}^2\mid f(\theta^*)\leq0\}$.}
	\label{f:Projection}
\end{figure}

The following lemma lists a key property related to matrix inversion in the presence of time-variations.
\begin{lemma}\label{l:Gamma_dot}
    For a matrix $0<\Gamma(t)=\Gamma^T(t)\in\mathbb{R}^{N\times N}$, the following holds: $\frac{d}{dt}\left(\Gamma^{-1}(t)\right)=-\Gamma^{-1}(t)\dot{\Gamma}(t)\Gamma^{-1}(t)$.
\end{lemma}

A central component of this paper is with regards to excitation of a regressor for which two definitions are provided.
\begin{definition}[\hspace{1sp}\cite{Narendra2005}]\label{d:PE}
    A bounded function $\phi:[t_0,\infty)\rightarrow\mathbb{R}^N$ is persistently exciting (PE) if there exists $T\hspace{-.045cm}>\hspace{-.045cm}0$ and $\alpha\hspace{-.045cm}>\hspace{-.045cm}0$ such that
    \begin{equation*}
        \int_t^{t+T}\phi(\tau)\phi^T(\tau)d\tau\geq\alpha I,\quad \forall t\geq t_0.
    \end{equation*}
\end{definition}
\begin{definition}[adapted from \cite{Roy_2018,Cho_2018}]\label{d:FE}
    A bounded function $\phi:[t_0,\infty)\rightarrow\mathbb{R}^N$ is finitely exciting (FE) on an interval $[t_1,~t_1+T]$ if there exists $t_1\geq t_0$, $T>0$, and $\alpha>0$ such that
    \begin{equation*}
        \int_{t_1}^{t_1+T}\phi(\tau)\phi^T(\tau)d\tau\geq\alpha I.
    \end{equation*}
\end{definition}
In both Definitions \ref{d:PE} and \ref{d:FE}, the degree of excitation is given by $\alpha$. It can be noted that the PE condition in Definition \ref{d:PE} pertains to a property over a moving window for all $t\geq t_0$, whereas the FE condition in Definition \ref{d:FE} pertains to a single interval $[t_1,~t_1+T]$.

\section{Adaptive Control of a Class of Plants with Time-Varying Parameters}
\label{s:Problem_Formulation}

\begin{table}[t]
\caption{Adaptive Control Systems with a Common Structure}
\centering
\begin{tabular}{lll}
\toprule
Name & Error Model & $Y$ \\
\midrule
State Feedback MRAC \cite{Narendra2005} & \hspace{.16cm}$\dot{e}=Ae+B\tilde{\theta}^T\phi$ & $-\phi e^TPB$ \\
  & $e_y=e$ & \vspace{.2cm}\\
Output Feedback MRAC & \hspace{.16cm}$\dot{e}=Ae+B\tilde{\theta}^T\phi$ & $-\phi e_y$ \\
$W(s)$ A.S, SPR \cite{Narendra2005} & $e_y=Ce$ & \vspace{.2cm}\\
Output Feedback MRAC & \hspace{.17cm}$\epsilon=\tilde{\theta}^T\zeta$ & $-\zeta\epsilon$ \\
$W(s)$ A.S., not SPR \cite{Narendra2005} &  & \vspace{.2cm}\\
Nonlinear Adaptive & \hspace{.16cm}$\dot{e}=A_z(e,\theta,t)e$ & $-\phi e$ \\
Backstepping \cite{Krstic_1995} & \hspace{.16cm}$~\quad+\tilde{\theta}^T\phi(e,\theta,t)$ & \\
Relative Degree $\leq 2$ & $e_y=e$ & \vspace{.2cm}\\
\bottomrule
\end{tabular}
\label{t:Common_Structure}
\end{table}

Large classes of problems in adaptive identification and control can be represented in the form of differential equations containing two errors, $e(t)\in\mathbb{R}^n$ and $\tilde{\theta}(t)\in\mathbb{R}^{N\times m}$. The first is an error that represents an identification error or tracking error. The second is the underlying parameter error, either in estimation of the plant parameter or the control parameter. The parameter error is commonly expressed as the difference between a parameter estimate $\theta$ and the true unknown value $\theta^*$ as $\tilde{\theta}(t)=\theta(t)-\theta^*(t)$. The differential equations which govern the evolution of $e(t)$ with $\tilde{\theta}(t)$ are referred to as error models \cite{Narendra2005,Narendra_1985,Loh_1999}, and provide insight into how stable adaptive laws for adjusting the parameter error can be designed for a large class of adaptive systems. The class of error models we focus on in this paper is of the form
\begin{align}\label{e:error_model_general}
    \begin{split}
        \dot{e}(t)&=g_1(e(t),\phi(t),\theta(t),\theta^*(t)),\\
        e_y(t)&=g_2(e(t),\phi(t),\theta(t),\theta^*(t)),
    \end{split}
\end{align}
where the regressor $\phi(t)\in\mathbb{R}^N$ and $e_y(t)\in\mathbb{R}^p$ is a measurable error at each $t$. The corresponding adaptive law for adjusting $\theta$ is assumed to be of the form
\begin{equation}\label{e:theta_dot_nom}
    \dot{\theta}(t)=\Gamma_0 Y(e_y(t),\phi(t),\theta(t)),
\end{equation}
where $Y$ is a known function that is implementable at each $t$ and $\Gamma_0\in\mathbb{R}^{N\times N}$ is a symmetric positive definite matrix referred to as the learning rate. In addition, for a given $g_1$ and $g_2$, $Y$ is chosen so that $e(t)=0$, $\tilde{\theta}(t)=0$ is an equilibrium point of the system, when $\theta^*$ is a constant. We consider all classes of adaptive systems that can be expressed in the form of \eqref{e:error_model_general} and \eqref{e:theta_dot_nom} where $g_1$, $g_2$, $Y$, and $\Gamma_0$ are such that all solutions are bounded for constant $\theta^*$, and where $\lim_{t\rightarrow\infty}e(t)=0$. In particular, we assume that $g_1$, $g_2$, and $Y$ are such that a quadratic Lyapunov function candidate
\begin{equation}\label{e:Lypaunov_Gamma_0_V}
    V(t)=e^T(t)Pe(t)+Tr\left[\tilde{\theta}^T(t)\Gamma_0^{-1}\tilde{\theta}(t)\right],
\end{equation}
yields a derivative for the case of constant $\theta^*$ as
\begin{equation}\label{e:Lypaunov_Gamma_0_V_dot}
    \dot{V}(t)\leq-e^T(t)Qe(t)-2Tr\left[\tilde{\theta}^T(t)Y(t)\right]+2Tr\left[\tilde{\theta}^T(t)\Gamma_0^{-1}\dot{\theta}(t)\right],
\end{equation}
where $P$ and $Q$ are symmetric positive definite matrices. Due to the choice of the adaptive law in \eqref{e:theta_dot_nom}, it follows therefore $\dot{V}(t)\leq-e^T(t)Qe(t)$. Further conditions on $g_1$, $g_2$, and $Y$ guarantee that $e(t)\rightarrow0$ as $t\rightarrow\infty$. We formalize this assumption below:
\begin{assumption}[Class of adaptive systems]\label{a:e_e_y_V_V_dot}
    For the case of a constant unknown parameter ($\dot{\theta}^*(t)=0$), the error model in \eqref{e:error_model_general} and the adaptive law in \eqref{e:theta_dot_nom} are such that they admit a Lyapunov function $V$ as in \eqref{e:Lypaunov_Gamma_0_V} which satisfies the inequality in \eqref{e:Lypaunov_Gamma_0_V_dot}.
\end{assumption}

Several adaptive systems that have been discussed in the literature satisfy Assumption \ref{a:e_e_y_V_V_dot}, some examples of which are shown in Table \ref{t:Common_Structure}. They include plants where state feedback is possible and certain matching conditions are satisfied, and where only outputs are accessible and a strictly positive real transfer function $W(s)$ can be shown to exist. For a SISO plant that is minimum phase, the same assumption can be shown to hold as well. Finally, for a class of nonlinear plants, where the underlying relative degree does not exceed two, Assumption \ref{a:e_e_y_V_V_dot} once again can be shown to be satisfied.

\subsection{Problem Formulation}
\label{ss:AC_TV_Parameters}

The class of error models we consider is of the form \eqref{e:error_model_general}, where $\theta^*(t)$, the time-varying unknown parameter, is such that if $\theta(t)\equiv\theta^*(t)$, then the solutions of \eqref{e:error_model_general} are globally bounded. This is formalized in the following assumption:
\begin{assumption}[Uncertainty variation]\label{a:theta_star}
    The uncertainty, $\theta^*(t)$, in \eqref{e:error_model_general} is such that $||\theta^*(t)||\leq\theta^*_{max}$, $\forall t\geq t_0$. In addition, its time derivative, $\dot{\theta}^*(t)$, is assumed to be bounded, i.e. $\lVert\dot{\theta}^*(t)\rVert\leq \theta^*_{d,max}$, $\forall t\geq t_0$. Furthermore, if $\theta(t)=\theta^*(t)$, $\forall t\geq t_0$ then the solutions of \eqref{e:error_model_general} are globally bounded.
\end{assumption}

The problem that we address in this paper is the determination of an adaptive law similar to \eqref{e:theta_dot_nom} for all error models of the form \eqref{e:error_model_general} where Assumptions \ref{a:e_e_y_V_V_dot} and \ref{a:theta_star} hold. Our goal is to ensure global boundedness of solutions of \eqref{e:error_model_general} and exponentially fast tending of both $e(t)$ and $\tilde{\theta}(t)$ to a compact set with finite excitation.

\section{Update Law with a Time-Varying Learning Rate}
\label{s:Design}

The adaptive law that we propose is a modification of \eqref{e:theta_dot_nom} with a \emph{time-varying learning rate} $\Gamma(t)$ as $\dot{\theta}(t)=\Gamma(t) Y(e_y(t),\phi(t),\theta(t))$. To ensure a bounded $\theta(t)$, we include a projection operator in this adaptive law which is stated compactly as
\begin{equation}\label{e:theta_dot_AR}
    \dot{\theta}(t)=\text{Proj}_{\Gamma(t)}\left(\theta(t),Y(t),F\right),\quad \theta(t_0)\in\Xi_{1}.
\end{equation}
where $\text{Proj}_{\Gamma(t)}(\cdot,\cdot,\cdot)$ is defined as in Definition \ref{d:Projection_Matrix_Gamma}. The $\Gamma$-projection operator in \eqref{e:theta_dot_AR} uses $F(\theta)=[f_1,\ldots,f_m]^T:\mathbb{R}^{N\times m}\rightarrow\mathbb{R}^{m}$, where $f_j(\theta_j):\mathbb{R}^N\rightarrow\mathbb{R}$ are coercive, continuously differentiable convex functions. Define the subsets $\Xi_{\delta,j}=\{\theta_j\in\mathbb{R}^N\mid f_j(\theta_j)\leq\delta\}$, $\forall j\in1,\ldots,m$, and $\Xi_{\delta}=\{M\in\mathbb{R}^{N\times m}\mid M_j\in\Xi_{\delta,j},~\forall j\in1,\ldots,m\}$. Via Assumption \ref{a:theta_star}, each $f_j$ are chosen such that $||\theta^*(t)||\leq\theta^*_{max}$ and $\delta=0$ corresponds to $\theta_j^*(t)\in\Xi_{0,j}$, $\forall j\in1,\ldots,m$, $\forall t\geq t_0$.

The time-varying learning rate $\Gamma(t)$ is adjusted using the projection operator for positive definite matrices (see Definition \ref{d:Projection_Matrix_PD}) as
\begin{align}\label{e:Gamma_Update}
    \begin{split}
    \dot{\Gamma}(t)&=\lambda_{\Gamma}\text{Proj}\left(\Gamma(t),\mathcal{Y}(t),\mathcal{F}\right),\quad \Gamma(t_0)=\Gamma_{t_0},\\
    \mathcal{Y}(t)&=\Gamma(t)-\kappa\Gamma(t)\Omega(t)\Gamma(t),
    \end{split}
\end{align}
where $\lambda_{\Gamma}$, $\kappa$ are positive scalars and $\Omega(t)\in\mathbb{R}^{N\times N}$. $\Gamma_{t_0}$ is a symmetric positive definite constant matrix chosen so that $\Gamma_{t_0}\in\Upsilon_1=\{\Gamma\in\mathbb{R}^{N\times N}\mid \mathcal{F}(\Gamma)\leq1\}$, where $\mathcal{F}(\Gamma):\mathbb{R}^{N\times N}\rightarrow\mathbb{R}$ is a coercive, continuously differentiable convex function. Lemma \ref{l:bounded_sublevel} implies there exists a constant $\Gamma_{max}>0$ such that $\lVert\Gamma\rVert\leq\Gamma_{max}$ for all $\Gamma\in\Upsilon_1$. We assume that $\mathcal{F}$ is chosen so that $\mathcal{F}(\Gamma)=1$ for all $\lVert\Gamma\rVert=\Gamma_{max}$. It should be noted that a large $\Omega(t)$ contributes to a decrease in $\Gamma(t).$

Finally the matrix $\Omega(t)$ is adjusted as
\begin{equation}\label{e:Omega_Update}
    \dot{\Omega}(t)=-\lambda_{\Omega}\Omega(t)+\lambda_{\Omega}\frac{\phi(t)\phi^T(t)}{1+\phi^T(t)\phi(t)},\quad \Omega(t_0)=\Omega_0,
\end{equation}
where $\Omega_0$ is a symmetric positive semi-definite matrix with $0\leq\Omega_0\leq I$ and denotes a filtered normalized regressor matrix. $\lambda_{\Gamma}$ and $\lambda_{\Omega}$ are arbitrary positive scalars and $\kappa$ is chosen so that $\kappa>\Gamma_{max}^{-1}$. These scalars represent various weights of the proposed algorithm. The main contribution of this paper is the adaptive law in \eqref{e:theta_dot_AR}, \eqref{e:Gamma_Update}, and \eqref{e:Omega_Update}, which will be shown to result in bounded solutions in Section \ref{s:Stability} which tend exponentially fast to a compact set if $\phi(t)$ is finitely exciting. If in addition, $\phi(t)$ is persistently exciting, exponentially fast convergence to a compact set will occur $\forall t\geq t_0$.

\begin{remark}
    It should be noted that while different aspects of the algorithm in \eqref{e:theta_dot_AR}, \eqref{e:Gamma_Update}, and \eqref{e:Omega_Update} have been explored in the literature, a combined algorithm as presented and analyzed here has not been reported thus far. For example, filtered regressor outer products are considered in \cite{Krstic_1995a,Roy_2018,Cho_2018}, but parameters are assumed to be constant. It is the fact that we have the use of $\Gamma$ in \eqref{e:Gamma_Update} in a specific manner (see choice of $\mathcal{Y}(t)$), the fact that we are using projections to contain $\Gamma$ within a bounded set, and that we are using a filtered version of $\phi\phi^T$ together with normalization to adjust $\Omega$ as in \eqref{e:Omega_Update} that enables our proposed algorithm to have desirable convergence properties, over a range of excitation conditions.
\end{remark}
\begin{remark}
    The projection operator employed in \eqref{e:Gamma_Update} is one method to bound the time-varying learning rate. Instead of \eqref{e:Gamma_Update}, one can also use a time-varying forgetting factor, $(1-\lVert\Gamma(t)\rVert/\Gamma_{max})$, to provide for $\lVert\Gamma(t)\rVert\leq\Gamma_{max}$ in an update of the form $\dot{\Gamma}(t)=\lambda_{\Gamma}(1-\lVert\Gamma(t)\rVert/\Gamma_{max})\left[\Gamma(t)-\kappa\Gamma(t)\Omega(t)\Gamma(t)\right]$. While the time-varying forgetting factor also achieves a bounded $\lVert\Gamma(t)\rVert$, it is more conservative than the projection operator in \eqref{e:Gamma_Update} as it is always active. In comparison, the projection operator as in \eqref{e:Gamma_Update} only provides limiting action if $\Gamma(t)$ is in a specified boundary region and the direction of evolution of $\Gamma(t)$ causes $\mathcal{F}(\Gamma)$ to increase. An equivalence between time-varying forgetting factors and projection operators may be drawn using the square root of the function in \eqref{e:Projection_AC} with $\theta^*_{max}=0$, $\varepsilon=\Gamma_{max}$, and the limiting action always active.
\end{remark}
\begin{remark}
    It can be noted that while the regressor normalization in \eqref{e:Omega_Update} is optional for linear regression systems \cite{Slotine_1989,Li_1990}, it is required for general adaptive control problems in the presence of system dynamics as the regressor cannot be assumed to be bounded.
\end{remark}
\begin{remark}
    The standard parameter update in \eqref{e:theta_dot_nom} requires $N\times m$ integrations to adjust the $N\times m$ parameters $\theta$. Given that the updates for both $\Gamma$ and $\Omega$ result in symmetric matrices, an additional $N(N+1)/2$ integrations are required for each update for a total increase of $N(N+1)$ integrations.
\end{remark}

\begin{table}[t]
\caption{Norm of Signals in Phases of Excitation Propagation}
\centering
\begin{tabular}{cccccc}
\toprule
$t\in$ & $[t_1,t_2]$ & $t_2$ & $[t_2,t_3]$ & $[t_3,t_4]$ \\
\midrule
$\lVert\phi(t)\phi^T(t)\rVert$ & $\geq0$ &  &  &  \\
$\lVert\int_{t_1}^{t_2}\phi(\tau)\phi^T(\tau)d\tau\rVert$ &  & $\geq\alpha$ &  &  \\
$\lVert\Omega(t)\rVert$ &  &  & $\geq\Omega_{FE}$ &  \\
$\lVert\Gamma(t)\rVert$ &  &  &  & $\leq\Gamma_{FE}$ \\
$\rho(t)$ &  &  &  & $\geq\rho_0$ \\
\bottomrule
\end{tabular}
\label{t:Comparison_Alg}
\end{table}

\section{Stability and Convergence Analysis}
\label{s:Stability}

We now state and prove the main result. The following assumption is needed for discussion of a finite excitation. We define an excitation level $\alpha_0$ on an interval $[t_1,t_2]$ as
\begin{equation}\label{e:minimum_FE}
    \alpha_0=\frac{k_{\Omega}d}{\kappa\Gamma_{max}\rho_{\Omega}\lambda_{\Omega}\exp(-\lambda_{\Omega}(t_2-t_1))},
\end{equation}
where $\rho_{\Omega}\in(0,1)$, $k_{\Omega}>1$, and $d=\max_{\tau\in[t_1,t_2]}\{1+\lVert\phi(\tau)\rVert^2\}$.
\begin{assumption}[Finite excitation]\label{a:FE}
    There exists a time $t_1\geq t_0$ and a time $t_2>t_1$ such that the regressor $\phi(t)$ in \eqref{e:error_model_general} is finitely exciting over $[t_1,t_2]$, with excitation level $\alpha\geq \alpha_0$.
\end{assumption}

\subsection{Propagation of Excitation and Boundedness of Information Matrix, Time-Varying Learning Rate}
\label{ss:Gamma_Omega_Propagation}

We first prove a few important properties of $\Omega(t)$ and $\Gamma(t)$ under different excitation conditions.

\begin{lemma}\label{l:Omega}
    For the algorithm in \eqref{e:Omega_Update}, it follows that for any $\phi(t)$,
    \begin{enumerate}
        \item $\Omega(t)\geq0$, $\forall t\geq t_0$,
        \item $\Omega(t)\leq I$, $\forall t\geq t_0$.
    \end{enumerate}
    If in addition $\phi$ is finitely exciting as in Assumption \ref{a:FE}, then
    \begin{enumerate}\setcounter{enumi}{2}
        \item $\Omega(t)\geq\Omega_{FE}I>(1/(\kappa\Gamma_{max}))I$, $\forall t\in[t_2,~t_3]$,
    \end{enumerate}
    where $\Omega_{FE}=(k_{\Omega}/(\kappa\Gamma_{max}))$ and $t_3=t_2-(\ln{\rho_{\Omega}})/\lambda_{\Omega}$. If in addition $\phi$ is persistently exciting $\forall t\geq t_1\geq t_0$ (see Definition \ref{d:PE}), with interval $T$ and level $\alpha\geq\alpha_0'$, $t_2'=t_1+T$, then
    \begin{enumerate}\setcounter{enumi}{3}
        \item $\Omega(t)\geq\Omega_{FE}I>(1/(\kappa\Gamma_{max}))I$, $\forall t\geq t_2'$,
    \end{enumerate}
    $\alpha_0'=\alpha_0\exp(-\lambda_{\Omega}(t_2-t_2'))d'/d$, $d'=\max_{\tau\geq t_1}\{1+\lVert\phi(\tau)\rVert^2\}$.
\end{lemma}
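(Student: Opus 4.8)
The plan is to solve the linear matrix ODE \eqref{e:Omega_Update} explicitly by variation of constants and read off all four claims from the resulting integral representation. Writing $N(\tau)=\phi(\tau)\phi^T(\tau)/(1+\phi^T(\tau)\phi(\tau))$, the solution is
\begin{equation*}
\Omega(t)=e^{-\lambda_\Omega(t-t_0)}\Omega_0+\lambda_\Omega\int_{t_0}^t e^{-\lambda_\Omega(t-\tau)}N(\tau)\,d\tau.
\end{equation*}
Every term here is a positive scalar times a positive semidefinite matrix, so the two basic bounds follow at once. For claim~1, since $\Omega_0\geq0$ and $N(\tau)\geq0$ the entire right-hand side is an integral of positive semidefinite matrices, giving $\Omega(t)\geq0$. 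For claim~2, I would first record the elementary fact $N(\tau)\leq I$ (for any $v$, $v^TN(\tau)v=(v^T\phi)^2/(1+\lVert\phi\rVert^2)\leq\lVert v\rVert^2$ by Cauchy--Schwarz) together with $\Omega_0\leq I$; substituting these upper bounds and using $\lambda_\Omega\int_{t_0}^t e^{-\lambda_\Omega(t-\tau)}d\tau=1-e^{-\lambda_\Omega(t-t_0)}$ telescopes the two terms to exactly $I$.

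Claim~3 is the crux. Evaluating the representation at $t=t_2$, I would discard the positive semidefinite initial term and the positive semidefinite contribution from $[t_0,t_1]$ to obtain $\Omega(t_2)\geq\lambda_\Omega\int_{t_1}^{t_2}e^{-\lambda_\Omega(t_2-\tau)}N(\tau)\,d\tau$. On this window I bound the exponential below by $e^{-\lambda_\Omega(t_2-t_1)}$ and the normalization below by $1/d$ with $d=\max_{\tau\in[t_1,t_2]}\{1+\lVert\phi(\tau)\rVert^2\}$, so the finite excitation condition $\int_{t_1}^{t_2}\phi\phi^T d\tau\geq\alpha I$ yields $\Omega(t_2)\geq(\lambda_\Omega\alpha e^{-\lambda_\Omega(t_2-t_1)}/d)I$. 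The definition of $\alpha_0$ in \eqref{e:minimum_FE} is exactly calibrated so that $\alpha\geq\alpha_0$ forces $\Omega(t_2)\geq(\Omega_{FE}/\rho_\Omega)I$, where $\Omega_{FE}=k_\Omega/(\kappa\Gamma_{max})$. To extend to the whole interval, note that for $t\geq t_2$ the representation gives $\Omega(t)\geq e^{-\lambda_\Omega(t-t_2)}\Omega(t_2)$ upon dropping the positive semidefinite integral over $[t_2,t]$, so $\Omega(t)$ is bounded below by $e^{-\lambda_\Omega(t-t_2)}(\Omega_{FE}/\rho_\Omega)I$. Requiring this to stay above $\Omega_{FE}I$ is precisely $e^{-\lambda_\Omega(t-t_2)}\geq\rho_\Omega$, i.e. $t\leq t_2-(\ln\rho_\Omega)/\lambda_\Omega=t_3$; and $k_\Omega>1$ furnishes the strict gap $\Omega_{FE}>1/(\kappa\Gamma_{max})$.

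For claim~4 the persistent excitation hypothesis supplies $\int_{t-T}^{t}\phi\phi^T d\tau\geq\alpha I$ for every $t\geq t_1+T=t_2'$, so I would repeat the claim-3 estimate on the sliding window $[t-T,t]$ rather than the fixed window $[t_1,t_2]$. This time the normalization is controlled using $d'=\max_{\tau\geq t_1}\{1+\lVert\phi(\tau)\rVert^2\}$ and the exponential by $e^{-\lambda_\Omega T}$, giving a fresh lower bound $\Omega(t)\geq(\lambda_\Omega\alpha e^{-\lambda_\Omega T}/d')I$ at \emph{every} $t\geq t_2'$; with $\alpha\geq\alpha_0'$ this evaluates to at least $(\Omega_{FE}/\rho_\Omega)I\geq\Omega_{FE}I$, and no decay argument is needed because the excitation is continually replenished.

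The main obstacle I anticipate is purely the constant bookkeeping: one must track the exponential weights and the normalization constants $d$, $d'$ carefully so that the thresholds $\alpha_0$, $\alpha_0'$ and the time $t_3$ emerge matching the stated formulas, and in particular so that the factor $1/\rho_\Omega$ of slack built into $\Omega(t_2)$ is exactly what survives the exponential decay down to $t_3$ in the finite excitation case. The matrix manipulations themselves are routine once one commits to the positive semidefinite ordering, which is preserved under multiplication by positive scalars and under integration.
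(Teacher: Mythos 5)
Your proposal is correct and follows essentially the same route as the paper's proof: the variation-of-constants representation of \eqref{e:Omega_Update} (the paper writes it via quadratic forms $v^T\Omega(t)v$, you via the positive semidefinite order, which is equivalent), the lower bound $\Omega(t_2)\geq(\lambda_\Omega\alpha e^{-\lambda_\Omega(t_2-t_1)}/d)I\geq(\Omega_{FE}/\rho_\Omega)I$, and the decay estimate $\Omega(t)\geq e^{-\lambda_\Omega(t-t_2)}\Omega(t_2)$ that defines $t_3$. Your sliding-window argument for claim 4, with the identity $\alpha_0'e^{-\lambda_\Omega T}/d'=\alpha_0 e^{-\lambda_\Omega(t_2-t_1)}/d$, is exactly the extension the paper leaves implicit in its one-line ``immediate from extension of case 3)''.
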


\begin{lemma}\label{l:Gamma}
    The solutions of \eqref{e:Gamma_Update} and \eqref{e:rho} satisfy the following:
    \begin{enumerate}
        \item $\Gamma(t)\leq\Gamma_{max}I$, $\Gamma^{-1}(t)\geq\Gamma_{max}^{-1}I>0$, $\forall t\geq t_0$,
        \item $\rho(t)\in[0,1]$, $\forall t\geq t_0$,
        \item $\Gamma(t)\geq\Gamma_{min}I>0$, $\Gamma^{-1}(t)\leq\Gamma_{min}^{-1}I$, $\forall t\geq t_0$,
    \end{enumerate}
    where $\Gamma_{min}=1/(\max eig(\Gamma^{-1}_{t_0})+\kappa)$. If in addition $\phi$ is finitely exciting as in Assumption \ref{a:FE}, then there exists a $\rho_0\in(0,1]$ such that
    \begin{enumerate}\setcounter{enumi}{3}
        \item $\Gamma(t)\leq\Gamma_{FE}I<\Gamma_{max}I$, $\Gamma^{-1}(t)\geq\Gamma_{FE}^{-1}I>0$, $\forall t\in[t_3,~t_4]$,
        \item $\rho(t)\geq\rho_0>0$, $\forall t\in[t_3,~t_4]$,
    \end{enumerate}
    where $\Gamma_{FE}=\rho_{\Gamma}^{-1}\Gamma_{t_3}$, $\rho_{\Gamma}\in((\Gamma_{t_3}/\Gamma_{max}),1)$, $\Gamma_{t_3}=\Gamma(t_3)<\Gamma_{max}$, and $t_4=t_3-(\ln{\rho_{\Gamma}})/\lambda_{\Gamma}$. If in addition $\phi$ is persistently exciting $\forall t\geq t_1\geq t_0$ (see Definition \ref{d:PE}), with interval $T$ and level $\alpha\geq\alpha_0'$, then there exists a $\rho_0\in(0,1]$, $t_3'>t_2'$, and $\Gamma(t_3')\leq\Gamma_{PE}<\Gamma_{max}$ such that
    \begin{enumerate}\setcounter{enumi}{5}
        \item $\Gamma(t)\leq\Gamma_{PE}I<\Gamma_{max}I$, $\Gamma^{-1}(t)\geq\Gamma_{PE}^{-1}I>0$, $\forall t\geq t_3'$,
        \item $\rho(t)\geq\rho_0>0$, $\forall t\geq t_3'$.
    \end{enumerate}
\end{lemma}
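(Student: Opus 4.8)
The plan is to treat the seven claims in three groups: the always-valid bounds (parts 1--3), the finite-excitation bounds (parts 4--5), and the persistent-excitation bounds (parts 6--7). The backbone of every estimate is the compact form $\dot{\Gamma}=\lambda_\Gamma\rho(t)\mathcal{Y}=\lambda_\Gamma\rho(t)(\Gamma-\kappa\Gamma\Omega\Gamma)$ from Remark \ref{r:rho} together with the inversion identity $\frac{d}{dt}(\Gamma^{-1})=-\Gamma^{-1}\dot\Gamma\,\Gamma^{-1}$ of Lemma \ref{l:Gamma_dot}, which collapses to the linear-in-$\Gamma^{-1}$ flow
\[
\frac{d}{dt}\Gamma^{-1}(t)=-\lambda_\Gamma\rho(t)\bigl(\Gamma^{-1}(t)-\kappa\Omega(t)\bigr).
\]
For part 1 I would show the sublevel set $\Upsilon_1$ is forward invariant: differentiating $\mathcal{F}(\Gamma(t))$ gives $\dot{\mathcal{F}}=\lambda_\Gamma\rho(t)\,Tr[\nabla\mathcal{F}(\Gamma)^T\mathcal{Y}]$, and on the boundary $\mathcal{F}=1$ the definition \eqref{e:rho} forces $\rho=1-\mathcal{F}=0$ exactly when the unprojected flow would push $\mathcal{F}$ upward, so $\mathcal{F}(\Gamma(t))\le 1$ and hence $\Gamma\le\Gamma_{max}I$ for all $t$. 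Part 2 is immediate from \eqref{e:rho} since $\mathcal{F}(\Gamma)\in[0,1]$ whenever the first branch is active. For part 3 I would freeze a unit vector $v$, set $q(t)=v^T\Gamma^{-1}(t)v$, and read off from the displayed flow, using $0\le\Omega\le I$ from Lemma \ref{l:Omega} and $\rho\ge0$, the scalar inequality $\dot q\le-\lambda_\Gamma\rho(t)(q-\kappa)$; a comparison argument gives $q(t)\le\max\{q(t_0),\kappa\}\le\max eig(\Gamma_{t_0}^{-1})+\kappa$, hence $\Gamma^{-1}\le\Gamma_{min}^{-1}I$ uniformly in $v$ and $\Gamma\ge\Gamma_{min}I>0$, which also secures positive-definiteness throughout.

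For the finite-excitation claims I would invoke part 3 of Lemma \ref{l:Omega}, namely $\Omega(t)\ge\Omega_{FE}I$ on $[t_2,t_3]$, which drives $\Gamma$ down so that $\Gamma_{t_3}=\Gamma(t_3)<\Gamma_{max}$. On the short window $[t_3,t_4]$ I would bound regrowth by discarding the negative term, $\dot\Gamma\le\lambda_\Gamma\rho(t)\Gamma\le\lambda_\Gamma\Gamma$, so that a max-eigenvalue (upper Dini derivative) comparison yields $\max eig(\Gamma(t))\le\Gamma_{t_3}e^{\lambda_\Gamma(t-t_3)}\le\Gamma_{t_3}e^{-\ln\rho_\Gamma}=\rho_\Gamma^{-1}\Gamma_{t_3}=\Gamma_{FE}$, where $t_4=t_3-(\ln\rho_\Gamma)/\lambda_\Gamma$ and $\rho_\Gamma>\Gamma_{t_3}/\Gamma_{max}$ guarantee $\Gamma_{FE}<\Gamma_{max}$; this is part 4. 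Part 5 follows by continuity: on the compact set $\{\Gamma=\Gamma^T>0:\lVert\Gamma\rVert\le\Gamma_{FE}\}$ the continuous function $\mathcal{F}$ attains a maximum strictly below $1$ (as $\mathcal{F}=1$ only at $\lVert\Gamma\rVert=\Gamma_{max}$), so $\rho(t)\ge\rho_0>0$.

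The persistent-excitation claims (parts 6--7) are the main obstacle, because here I cannot rely on a short interval and must establish a genuine invariance/attractivity property while the lower bound on $\rho$ and the upper bound on $\Gamma$ are mutually coupled. Using part 4 of Lemma \ref{l:Omega}, $\Omega(t)\ge\Omega_{FE}I$ for all $t\ge t_2'$, the decisive inequality is $\kappa\Omega_{FE}=k_\Omega/\Gamma_{max}>\Gamma_{max}^{-1}$, which holds precisely because $k_\Omega>1$: it places the target value of the $\Gamma^{-1}$-flow strictly inside $\Upsilon_1$. Tracking $\nu(t)=\min eig(\Gamma^{-1}(t))$ along its eigenvector $w$, the displayed flow gives $\dot\nu\ge-\lambda_\Gamma\rho(t)(\nu-\kappa\Omega_{FE})$, and the key observation is that whenever $\nu$ approaches the boundary value $\Gamma_{max}^{-1}$ the unprojected flow $\mathcal{Y}$ points \emph{inward} (decreasing $\mathcal{F}$), so the second branch of \eqref{e:rho} is inactive, $\rho=1$, and $\dot\nu\ge\lambda_\Gamma(k_\Omega-1)/\Gamma_{max}>0$. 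I would convert this into a comparison argument showing $\nu(t)$ is pushed above some $\nu_*\in(\Gamma_{max}^{-1},k_\Omega/\Gamma_{max})$ after a finite time $t_3'>t_2'$ and remains there, yielding $\Gamma(t)\le\Gamma_{PE}I$ with $\Gamma_{PE}=1/\nu_*<\Gamma_{max}$; part 7 then follows from part 6 exactly as part 5 followed from part 4. The care required is in handling the non-smoothness of the extremal eigenvalue and in making the $\rho$--$\Gamma$ coupling rigorous, which I would resolve by restricting to the sublevel sets of $\nu$ on which $\rho$ admits a uniform positive lower bound.
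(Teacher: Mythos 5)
Your proposal is correct and follows essentially the same route as the paper: forward invariance of $\Upsilon_1$ via the case analysis of $\dot{\mathcal{F}}(\Gamma)$ under projection, the linear flow $\frac{d}{dt}\Gamma^{-1}=-\lambda_\Gamma\rho(\Gamma^{-1}-\kappa\Omega)$ with $0\leq\Omega\leq I$ for $\Gamma_{min}$, discarding the $\kappa\Gamma\Omega\Gamma$ term to bound regrowth on $[t_3,t_4]$ (the paper performs the identical estimate on the $\Gamma^{-1}$ side via its variation-of-constants formula \eqref{e:Gamma_Inverse}), and the inward-pointing-flow argument $\kappa\Omega_{FE}>\Gamma_{max}^{-1}$ (from $k_\Omega>1$) to keep $\Gamma$ strictly inside the boundary so that $\rho\geq\rho_0>0$. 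Your parts 6--7 are in fact more explicit than the paper's, which disposes of the persistent-excitation case with ``immediate from extension of the proof of case 4)''; your eigenvalue-tracking comparison is precisely that extension spelled out, at the same (slightly informal) level of rigor the paper itself employs near the projection boundary.
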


The properties of $\Omega$ and $\Gamma$ for a persistently exciting $\phi$ are relatively well known. For a finitely exciting $\phi$, it should be noted that after a certain time elapses, the lower bound for $\Omega$ is realized. This propagation is illustrated in Table \ref{t:Comparison_Alg}.

The choice of the finite excitation level $\alpha_0$ in Assumption \ref{a:FE} enables a fast convergence rate as follows: The denominator $\kappa\Gamma_{max}$ in $\alpha_0$ ensures that the update in \eqref{e:Gamma_Update} pushes $\Gamma(t)$ away from $\Gamma_{max}$, $\rho_{\Omega}$ provides for a bound for $\Omega$ away from a minimum value, and $\lambda_{\Omega}\exp(-\lambda_{\Omega}(t_2-t_1))$ accounts for excitation propagation through \eqref{e:Omega_Update}. The numerator scaling $d$ accounts for the normalization in \eqref{e:Omega_Update}, and $k_{\Omega}$ provides for a bound away from a minimum excitation level.

\subsection{Stability and Convergence Analysis}
\label{ss:Lyapunov}

With the properties of the learning rate and filtered regressor above, we now proceed to the main theorem. The following lemma and corollary state important properties of the parameter estimate $\theta$.
\begin{lemma}\label{l:theta_t}
    The update for $\theta(t)$ in \eqref{e:theta_dot_AR} guarantees that there exists a $\theta_{max}$ such that $\lVert\theta(t)\rVert\leq\theta_{max}$, $\forall t\geq t_0$.
\end{lemma}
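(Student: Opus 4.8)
The plan is to show that the projection keeps each column of $\theta(t)$ inside the bounded sublevel set $\Xi_{1,j}$, and then to invoke coercivity to obtain the uniform bound. Since the matrix projection in Definition~\ref{d:Projection_Matrix_Gamma} acts columnwise, it suffices to analyze a single column $\theta_j(t)$ governed by $\dot{\theta}_j(t)=\text{Proj}_{\Gamma(t)}(\theta_j,y_j,f_j)$ and to prove that the set $\Xi_{1,j}=\{\theta_j\mid f_j(\theta_j)\le 1\}$ is positively invariant for each $j\in1,\ldots,m$.

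First I would compute the evolution of the scalar $f_j(\theta_j(t))$ along trajectories, $\dot{f}_j=(\nabla f_j(\theta_j))^T\dot{\theta}_j$. In the ``otherwise'' branch of \eqref{e:Projection_Vector_Gamma} one has $\dot{\theta}_j=\Gamma y_j$, so $\dot{f}_j=(\nabla f_j)^T\Gamma y_j=y_j^T\Gamma\nabla f_j$ using $\Gamma=\Gamma^T$. In the active branch, substituting the projected update and cancelling the scalar $(\nabla f_j)^T\Gamma\nabla f_j$ between numerator and denominator yields the clean identity $\dot{f}_j=\left(y_j^T\Gamma\nabla f_j\right)\left(1-f_j(\theta_j)\right)$.

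The key step is the invariance argument on the boundary $f_j(\theta_j)=1$. If the active branch is in effect, which by \eqref{e:Projection_Vector_Gamma} requires $y_j^T\Gamma\nabla f_j>0$, then the factor $(1-f_j)$ vanishes and $\dot{f}_j=0$; if instead the otherwise branch is in effect while $f_j=1>0$, then necessarily $y_j^T\Gamma\nabla f_j\le 0$, so $\dot{f}_j=y_j^T\Gamma\nabla f_j\le 0$. In either case $\dot{f}_j\le 0$ whenever $f_j=1$, so trajectories cannot exit $\{f_j\le 1\}$. Since the initialization $\theta(t_0)\in\Xi_1$ in \eqref{e:theta_dot_AR} gives $\theta_j(t_0)\in\Xi_{1,j}$, positive invariance yields $f_j(\theta_j(t))\le 1$ for all $t\ge t_0$. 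This step uses only that $\Gamma(t)$ is symmetric positive definite at every instant, which is guaranteed by Lemma~\ref{l:Gamma}; hence the time-variation of $\Gamma(t)$ causes no difficulty, as the computation is pointwise in $t$.

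Finally, because each $f_j$ is coercive, Lemma~\ref{l:bounded_sublevel} implies the nonempty sublevel set $\Xi_{1,j}$ is bounded, say $\lVert\theta_j\rVert\le c_j$ for all $\theta_j\in\Xi_{1,j}$. Taking $\theta_{max}=\left(\sum_{j=1}^m c_j^2\right)^{1/2}$ then bounds the Frobenius norm of $\theta(t)$, and hence its 2-norm, so $\lVert\theta(t)\rVert\le\theta_{max}$ for all $t\ge t_0$. I expect the main obstacle to be the invariance argument: one must verify $\dot{f}_j\le 0$ on the level set in both branches and rule out instantaneous switching that could let $f_j$ cross the boundary, which is precisely where the sign condition $y_j^T\Gamma\nabla f_j>0$ defining the active branch does the essential work.
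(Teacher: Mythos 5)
Your proof is correct and takes essentially the same route as the paper's: you compute $\dot{f}_j$ in both branches of \eqref{e:Projection_Vector_Gamma} (obtaining the same identity $\dot{f}_j=y_j^T\Gamma\nabla f_j(\theta_j)(1-f_j(\theta_j))$ in the active branch), conclude positive invariance of $\{\theta_j\mid f_j(\theta_j)\leq1\}$ from the initialization $\theta(t_0)\in\Xi_1$, and then invoke coercivity via Lemma~\ref{l:bounded_sublevel} to bound each column and hence $\theta(t)$. Your explicit handling of the boundary case in the otherwise branch and the Frobenius-norm assembly of $\theta_{max}$ merely spell out details the paper leaves implicit.
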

\begin{corollary}\label{cor:tilde_theta_max}
    Under Assumption \ref{a:theta_star}, the update for $\theta(t)$ in \eqref{e:theta_dot_AR} provides for a constant $\tilde{\theta}_{max}$ such that $\lVert\tilde{\theta}(t)\rVert\leq\tilde{\theta}_{max}$, $\forall t\geq t_0$.
\end{corollary}

The following definitions are useful for stating the main result in Theorem \ref{theorem:Convergence}. Define scalars $\upsilon(t)$ and $\eta(t)$ as
\begin{equation}\label{e:upsilon}
    \upsilon(t)=\lambda_{\Gamma}\rho(t)\kappa \lVert\Omega(t)\rVert\lVert\tilde{\theta}(t)\rVert^2+2\Gamma_{min}^{-1}\lVert\tilde{\theta}(t)\rVert\lVert\dot{\theta}^*(t)\rVert,
\end{equation}
\begin{equation}\label{e:eta_t}
    \eta(t)=\min\left\{q_0,\lambda_{\Gamma}\rho(t)\Gamma_{max}^{-1}\right\}/\max\left\{p_{max},\Gamma_{min}^{-1}\right\}.
\end{equation}
It is easy to see that $0\leq \eta(t)$ and $0\leq\upsilon(t)\leq\upsilon_{max}$, where
\begin{equation}\label{e:upsilon_max}
    \upsilon_{max}=\lambda_{\Gamma}\kappa\tilde{\theta}_{max}^2+2\Gamma_{min}^{-1}\tilde{\theta}_{max}\theta^*_{d,max}.
\end{equation}
Define $\eta_0$ as
\begin{equation}\label{e:eta_0}
    \eta_0=\min\left\{q_0,\lambda_{\Gamma}\rho_0\Gamma_{max}^{-1}\right\}/\max\left\{p_{max},\Gamma_{min}^{-1}\right\},
\end{equation}
where $\rho_0\in(0,1]$. Define a compact set $D$ as
\begin{equation}\label{e:D}
    D=\left\{\left(e,\tilde{\theta}\right)\hspace{.05cm}\middle|\hspace{.05cm}\eta\left[p_{min}\lVert e\rVert^2+\Gamma_{max}^{-1}\lVert\tilde{\theta}\rVert^2\right]\leq\upsilon\right\},
\end{equation}
and
\begin{equation}\label{e:D_max}
    D_{max}=\left\{\left(e,\tilde{\theta}\right)\hspace{.05cm}\middle|\hspace{.05cm}\eta_0\left[p_{min}\lVert e\rVert^2+\Gamma_{max}^{-1}\lVert\tilde{\theta}\rVert^2\right]\leq\upsilon_{max}\right\}.
\end{equation}
We now state the main theorem.
\begin{theorem}\label{theorem:Convergence}
    Under Assumptions \ref{a:e_e_y_V_V_dot} and \ref{a:theta_star}, the update laws in \eqref{e:theta_dot_AR}, \eqref{e:Gamma_Update}, and \eqref{e:Omega_Update} for the error model in \eqref{e:error_model_general} guarantee for any $\phi(t)$,
    \begin{enumerate}\renewcommand{\theenumi}{\Alph{enumi}}
        \item boundedness of the trajectories of $e(t)$ and $\tilde{\theta}(t)$, $\forall t\geq t_0$.
    \end{enumerate}
    If in addition $\phi$ is finitely exciting as in Assumption \ref{a:FE}, then
    \begin{enumerate}\setcounter{enumi}{1}\renewcommand{\theenumi}{\Alph{enumi}}
        \item the trajectories of $e(t)$, $\tilde{\theta}(t)$ tend exponentially fast towards a compact set $D\subset D_{max}$, $\forall t\in[t_3,~t_4]$.
    \end{enumerate}
    If in addition $\phi$ is persistently exciting $\forall t\geq t_1\geq t_0$ as in Definition \ref{d:PE} with level $\alpha\geq\alpha_0'$ and interval $T$, then
    \begin{enumerate}\setcounter{enumi}{2}\renewcommand{\theenumi}{\Alph{enumi}}
        \item exponential convergence of the trajectories follows, of $e(t)$, $\tilde{\theta}(t)$ towards a compact set $D\subset D_{max}$, $\forall t\geq t_3'$,
    \end{enumerate}
    where time instances and excitation levels are as in Lemmas \ref{l:Omega}, \ref{l:Gamma}.
\end{theorem}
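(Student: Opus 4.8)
The plan is to study the time-varying quadratic form $V(t)=e^T(t)Pe(t)+Tr[\tilde{\theta}^T(t)\Gamma^{-1}(t)\tilde{\theta}(t)]$, i.e.\ the candidate of \eqref{e:Lypaunov_Gamma_0_V} with the adapted learning rate $\Gamma(t)$ in place of the constant $\Gamma_0$. The entire theorem will be reduced to the scalar differential inequality
\[
\dot{V}(t)\le-\eta(t)V(t)+\upsilon(t),
\]
with $\eta$ and $\upsilon$ exactly as in \eqref{e:eta_t} and \eqref{e:upsilon}. Once this is in hand, all three claims follow from the comparison lemma together with the excitation-dependent lower bounds on $\rho(t)$ supplied by Lemmas \ref{l:Omega} and \ref{l:Gamma}; the work is therefore concentrated in the derivative computation.

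To obtain the inequality I would differentiate $V$ term by term. The $e^TPe$ contribution, together with the cross terms from the error model \eqref{e:error_model_general}, is governed by Assumption \ref{a:e_e_y_V_V_dot}: the structural part of \eqref{e:Lypaunov_Gamma_0_V_dot} gives $\frac{d}{dt}(e^TPe)\le-e^TQe-2Tr[\tilde{\theta}^TY]$, and since this depends only on the error model it persists when $\theta^*$ is time-varying. For the parameter term I would write $\frac{d}{dt}Tr[\tilde{\theta}^T\Gamma^{-1}\tilde{\theta}]=2Tr[\tilde{\theta}^T\Gamma^{-1}\dot{\tilde{\theta}}]+Tr[\tilde{\theta}^T\tfrac{d}{dt}(\Gamma^{-1})\tilde{\theta}]$, substitute $\dot{\tilde{\theta}}=\text{Proj}_{\Gamma}(\theta,Y,F)-\dot{\theta}^*$ from \eqref{e:theta_dot_AR}, and apply Lemma \ref{l:Trace_theta_tilde_Proj_Y} to get $2Tr[\tilde{\theta}^T\Gamma^{-1}\text{Proj}_{\Gamma}(\theta,Y,F)]\le2Tr[\tilde{\theta}^TY]$, which cancels the $-2Tr[\tilde{\theta}^TY]$ term exactly — this cancellation is precisely why no filtering of the dynamics is required. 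Next, using Lemma \ref{l:Gamma_dot}, the compact form $\dot{\Gamma}=\lambda_{\Gamma}\rho(t)(\Gamma-\kappa\Gamma\Omega\Gamma)$ from \eqref{e:Gamma_Update} and Remark \ref{r:rho}, one finds $\frac{d}{dt}(\Gamma^{-1})=-\lambda_{\Gamma}\rho(t)(\Gamma^{-1}-\kappa\Omega)$, so that $Tr[\tilde{\theta}^T\tfrac{d}{dt}(\Gamma^{-1})\tilde{\theta}]=-\lambda_{\Gamma}\rho\,Tr[\tilde{\theta}^T\Gamma^{-1}\tilde{\theta}]+\lambda_{\Gamma}\rho\kappa\,Tr[\tilde{\theta}^T\Omega\tilde{\theta}]$. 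Collecting everything yields $\dot{V}\le-e^TQe-\lambda_{\Gamma}\rho\,Tr[\tilde{\theta}^T\Gamma^{-1}\tilde{\theta}]+\lambda_{\Gamma}\rho\kappa\,Tr[\tilde{\theta}^T\Omega\tilde{\theta}]-2Tr[\tilde{\theta}^T\Gamma^{-1}\dot{\theta}^*]$. The first two terms are the damping: bounding $Q\ge q_0I$ and $\Gamma^{-1}\ge\Gamma_{max}^{-1}I$ (Lemma \ref{l:Gamma}) and comparing against $V\le\max\{p_{max},\Gamma_{min}^{-1}\}(\lVert e\rVert^2+\lVert\tilde{\theta}\rVert^2)$ produces $-\eta(t)V$. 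The remaining two terms, bounded via $\Omega\le I$, $\rho\le1$ (Lemmas \ref{l:Omega}, \ref{l:Gamma}) and $\Gamma^{-1}\le\Gamma_{min}^{-1}I$, are exactly $\upsilon(t)$ of \eqref{e:upsilon}; with $\lVert\tilde{\theta}\rVert\le\tilde{\theta}_{max}$ from Corollary \ref{cor:tilde_theta_max} they are dominated by $\upsilon_{max}$ of \eqref{e:upsilon_max}.

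With the inequality established, the three parts follow. For (A), $\tilde{\theta}$ is already bounded by Corollary \ref{cor:tilde_theta_max}, and even where $\rho=0$ the inequality retains $\dot{V}\le-q_0\lVert e\rVert^2+\upsilon_{max}$, so $\dot{V}<0$ once $\lVert e\rVert$ (hence $V$) is large, which bounds $V$ and therefore $e$. For (B), Lemma \ref{l:Gamma} gives $\rho(t)\ge\rho_0$ on $[t_3,t_4]$, hence $\eta(t)\ge\eta_0>0$; the comparison lemma applied to $\dot{V}\le-\eta_0V+\upsilon_{max}$ drives $V$ exponentially at rate $\eta_0$ into $\{V\le\upsilon_{max}/\eta_0\}$, and since $p_{min}\lVert e\rVert^2+\Gamma_{max}^{-1}\lVert\tilde{\theta}\rVert^2\le V$ this set lies inside $D_{max}$ of \eqref{e:D_max}, while the sharper time-varying estimate $D$ of \eqref{e:D} follows from $\dot{V}<0$ outside $D$ together with $D\subset D_{max}$. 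For (C), Lemma \ref{l:Gamma} now guarantees $\rho(t)\ge\rho_0$ for all $t\ge t_3'$, so the identical argument holds on the semi-infinite interval, upgrading finite-window tending to genuine exponential convergence.

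I expect the main obstacle to be the derivative computation itself, specifically verifying that the two projection corrections — on $\theta$ through Lemma \ref{l:Trace_theta_tilde_Proj_Y} and on $\Gamma$ through the factor $\rho(t)$ — enter with the correct signs so that the $2Tr[\tilde{\theta}^TY]$ terms cancel cleanly and the $\frac{d}{dt}(\Gamma^{-1})$ contribution splits into a true damping term plus a term absorbable into $\upsilon$. The subtlety is that convergence of $\tilde{\theta}$ is driven not by the $\kappa\Omega$ term (which is positive and merely bounded into $\upsilon$) but by the $-\lambda_{\Gamma}\rho\,Tr[\tilde{\theta}^T\Gamma^{-1}\tilde{\theta}]$ term, which vanishes unless $\rho>0$; establishing $\rho\ge\rho_0$ is exactly where the excitation hypotheses enter through Lemma \ref{l:Gamma}. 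The comparison-lemma steps and the set inclusion $D\subset D_{max}$ are then routine.
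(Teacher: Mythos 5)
Your proposal is correct and matches the paper's proof essentially step for step: the same Lyapunov function \eqref{e:V_MRAC_AR2}, the same cancellation of the $-2Tr[\tilde{\theta}^T Y]$ term via Lemma \ref{l:Trace_theta_tilde_Proj_Y}, the same identity $\frac{d}{dt}\left(\Gamma^{-1}(t)\right)=-\lambda_{\Gamma}\rho(t)\left(\Gamma^{-1}(t)-\kappa\Omega(t)\right)$ yielding the key inequality \eqref{e:V_dot_After_Alg}, and the same Comparison Lemma argument with $\rho(t)\geq\rho_0$ from Lemma \ref{l:Gamma} on $[t_3,\,t_4]$ for part B and on $[t_3',\,\infty)$ for part C. The only cosmetic difference is that for part A you invoke the uniform bound $\dot{V}\leq-q_0\lVert e\rVert^2+\upsilon_{max}$ (together with boundedness of $\tilde{\theta}$ and $\Gamma^{-1}$ from the projection lemmas), whereas the paper states the case split \eqref{e:V_dot_cases} between $\rho(t)=0$ and $\rho(t)\in(0,1]$; both are valid and rest on the same lemmas.
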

\begin{proof}[Proof]
    Let $q_0=\min eig(Q)$, $p_{min}=\min eig(P)$, $p_{max}=\max eig(P)$. Consider a candidate Lyapunov function of the form
    \begin{equation}\label{e:V_MRAC_AR2}
        V(t)=e^T(t)Pe(t)+ Tr\left[\tilde{\theta}^T(t)\Gamma^{-1}(t)\tilde{\theta}(t)\right].
    \end{equation}
    It follows that
    \begin{align*}
        &\dot{V}(t)\hspace{-.05cm}\leq\hspace{-.05cm}\underbrace{-e^T(t)Qe(t)-2Tr\hspace{-.05cm}\left[\tilde{\theta}^T(t)Y(t)\right]\hspace{-.05cm}+ 2Tr\hspace{-.05cm}\left[\tilde{\theta}^T(t)\Gamma^{-1}(t)\dot{\theta}(t)\right]}_{\text{due to Assumption \ref{a:e_e_y_V_V_dot}}}\\
        &+\underbrace{Tr\left[\tilde{\theta}^T(t)\left\{\frac{d}{dt}\left(\Gamma^{-1}(t)\right)\right\}\tilde{\theta}(t)\right]}_{\text{due to time-varying }\Gamma(t)}- \underbrace{2Tr\left[\tilde{\theta}^T(t)\Gamma^{-1}(t)\dot{\theta}^*(t)\right]}_{\text{due to time-varying }\theta^*(t)}.
    \end{align*}
    Using \eqref{e:theta_dot_AR}, \eqref{e:Gamma_Update}, and Lemma \ref{l:Gamma_dot}, $\dot{V}(t)$ may be simplified as
    \begin{align*}
        \dot{V}(t)&\leq-e^T(t)Qe(t)-2Tr\left[\tilde{\theta}^T(t)\Gamma^{-1}(t)\dot{\theta}^*(t)\right]\\
        &+2Tr\left[\tilde{\theta}^T(t)\Gamma^{-1}(t)\left(\text{Proj}_{\Gamma(t)}\left(\theta(t),Y(t),F\right)-\Gamma(t)Y(t)\right)\right]\\
        &-\lambda_{\Gamma}Tr\left[\tilde{\theta}^T(t)\Gamma^{-1}(t)\text{Proj}\left(\Gamma(t),\mathcal{Y}(t),\mathcal{F}\right)\Gamma^{-1}(t)\tilde{\theta}(t)\right].
    \end{align*}
    Using Lemma \ref{l:Trace_theta_tilde_Proj_Y} and $\rho(t)$ in \eqref{e:rho}, we obtain that
    \begin{align}\label{e:V_dot_After_Alg}
        \begin{split}
        \dot{V}(t)&\leq-e^T(t)Qe(t)-2Tr\left[\tilde{\theta}^T(t)\Gamma^{-1}(t)\dot{\theta}^*(t)\right]\\
        &\quad-\lambda_{\Gamma}\rho(t)Tr\left[\tilde{\theta}^T(t)\left\{\Gamma^{-1}(t)-\kappa\Omega(t)\right\}\tilde{\theta}(t)\right].
        \end{split}
    \end{align}
    Using \eqref{e:upsilon}, \eqref{e:eta_t}, Corollary \ref{cor:tilde_theta_max}, and Assumption \ref{a:theta_star}, the inequality becomes
    \begin{equation}\label{e:V_dot_cases}
        \dot{V}(t)\hspace{-.06cm}\leq\hspace{-.06cm}\left\{\begin{array}{ll}
            \hspace{-.22cm}-q_0\lVert e(t)\rVert^2\hspace{-.02cm}+\hspace{-.02cm}2\Gamma_{min}^{-1}\tilde{\theta}_{max}\theta^*_{d,max}, & \hspace{-.25cm}\text{if }\rho(t)=0\\
            \hspace{-.22cm}-\eta(t)V(t)\hspace{-.02cm}+\hspace{-.02cm}\upsilon(t), & \hspace{-.25cm}\text{if }\rho(t)\in(0,1]
            \end{array}\right.
    \end{equation}
    From the first case of \eqref{e:V_dot_cases} it can be seen that $\dot{V}(t)\leq0$ for $\lVert e(t)\rVert\geq\sqrt{2\Gamma_{min}^{-1}\tilde{\theta}_{max}\theta^*_{d,max}/q_0}$. From Lemmas \ref{l:Omega}, \ref{l:Gamma}, \ref{l:theta_t}, and Corollary \ref{cor:tilde_theta_max}, each of $\Omega(t)$, $\Gamma(t)$, $\Gamma^{-1}(t)$, $\theta(t)$, $\theta^*(t)$ and $\tilde{\theta}(t)$ are bounded. Thus the trajectories of the closed loop system remain bounded. This proves Theorem \ref{theorem:Convergence}-A).
    
    From \eqref{e:V_MRAC_AR2} and \eqref{e:V_dot_cases}, it can be noted that $\dot{V}<0$ in $D^c$, where the compact set $D$ is defined in \eqref{e:D}. Applying the Comparison Lemma (see \cite{Khalil_2002}, Lemma 3.4) for the second case of \eqref{e:V_dot_cases}, we obtain that
    \begin{equation}\label{e:V_leq_Phi}
        V(t)\leq \Phi(t,t_3)V(t_3)+\int_{t_3}^{t}\Phi(t,\tau)\upsilon(\tau)d\tau,\quad\forall t\in[t_3,~t_4],
    \end{equation}
    with transition function $\Phi(t,\tau)=\text{exp}\left[-\int_{\tau}^t\eta(\tau)d\tau\right]$. It can be noted that from Lemma \ref{l:Gamma} it was shown that $\rho(t)\geq\rho_0$, $\forall t\in[t_3,~t_4]$, and thus $\eta(t)\geq\eta_0$, $\forall t\in[t_3,~t_4]$, which follows from \eqref{e:eta_t}, \eqref{e:eta_0}. Thus \eqref{e:V_leq_Phi} is simplified using \eqref{e:upsilon_max} as
    \begin{equation}\label{e:V_leq_eta_0}
        V(t)\leq \text{exp}\left[-\eta_0(t-t_3)\right]\left(\hspace{-.05cm}V(t_3)-\frac{\upsilon_{max}}{\eta_0}\hspace{-.05cm}\right)+\frac{\upsilon_{max}}{\eta_0},~\forall t\in[t_3,~t_4].
    \end{equation}
    Furthermore given that $\eta(t)\geq\eta_0$ $\forall t\in[t_3,~t_4]$ and $0\leq\upsilon(t)\leq\upsilon_{max}$, it can be noted that $D\subset D_{max}$, $\forall t\in[t_3,~t_4]$. Therefore it can be seen that over the interval of time $t\in[t_3,~t_4]$, the state error $e(t)$ and parameter error $\tilde{\theta}(t)$ tend exponentially fast towards the bounded set $D\subset D_{max}$. This proves Theorem \ref{theorem:Convergence}-B).
    
    If $\phi$ is persistently exciting with level $\alpha\geq\alpha_0'$, and interval $T$, then it follows from Lemma \ref{l:Gamma}-7) that \eqref{e:V_leq_Phi} and \eqref{e:V_leq_eta_0} hold for all $t\geq t_3'$, which proves Theorem \ref{theorem:Convergence}-C).
\end{proof}
\begin{remark}
    $\eta(t)$ denotes the convergence rate of $V(t)$. This in turn follows if $\rho(t)>0$, i.e. if $\Gamma(t)$ is bounded away from $\Gamma_{max}$. The latter follows from Lemma \ref{l:Omega}-3) and \ref{l:Omega}-4) if $\phi(t)$ is either finitely exciting or persistently exciting, with an exponentially fast trajectory of $V(t)$ towards a compact set occurring over a finite interval or for all $t\geq t_3$, respectively. This convergence rate however is upper bounded by $q_0/p_{max}$.
\end{remark}
\begin{remark}
    Theorem \ref{theorem:Convergence}-C) guarantees convergence of $V(t)$ to a compact set $D$, while Theorem \ref{theorem:Convergence}-B) guarantees that $V(t)$ approaches $D$. This set scales with the signal $\upsilon(t)$ in \eqref{e:upsilon}, which contains contributions both from $\Omega(t)$ and $\dot{\theta}^*(t)$. For static parameters ($\dot{\theta}^*(t)\equiv0$) and low excitation (i.e., $\rho(t)\neq0$ and $\lVert\Omega(t)\rVert<(1/(\kappa\Gamma_{max}))$), from \eqref{e:V_dot_After_Alg} it can be shown that the trajectories of $(e,\tilde{\theta})$, tend towards the origin, i.e. the set $(e,\tilde{\theta})=(0,0)$.
\end{remark}
\begin{remark}
    Since we did not introduce any filtering of the dynamics, the bound $\theta^*_{d,max}$ on the uncertain parameters is explicit in the compact set $D_{max}$. It can be seen that $D_{max}$ directly scales with $\theta^*_{d,max}$ from \eqref{e:upsilon_max}. Such an explicit bound cannot be derived using existing approaches in the literature which filter dynamics.
\end{remark}
\begin{remark}
    The dependence of $\upsilon(t)$ on $\dot{\theta}^*(t)$ is reasonable. As the time-variations in the uncertain parameters grow, it should be expected that the residue will increase as well. The dependence of $\upsilon(t)$ on the filtered regressor $\Omega(t)$ is introduced due to the structure of our algorithm in \eqref{e:theta_dot_AR}, \eqref{e:Gamma_Update}, and \eqref{e:Omega_Update}. As a result, even with persistent excitation, we can only conclude convergence of $V(t)$ to a compact set as opposed to convergence to the origin. This compact set will be present even in the absence of time-variations in $\theta^*$. This disadvantage, however, is offset by the property of exponential convergence to the compact set, which is virtue of the fact that we have a time-varying $\Gamma(t)$.
    
    A closer examination of the convergence properties of the proposed algorithm is worth carrying out for the case of constant parameters. It is clear from \eqref{e:V_dot_After_Alg} that the negative contributions to $\dot{V}(t)$ come from the first term, while any positive contribution comes if $\kappa\Omega(t)>\Gamma^{-1}(t)$. That is, if there is a large enough excitation, then the third term can be positive. This in turn is conservatively reflected in the magnitude of $\upsilon(t)$. It should however be noted that a large $\Omega(t)$ with persistent excitation, leads to a large $e(t)$, which implies that as the third term in \eqref{e:V_dot_After_Alg} becomes positive, it leads to a first term that is proportionately large and negative as well, thereby resulting in a net contribution that is negative. An analytical demonstration of this effect, however, is difficult to obtain. For this reason, the nature of our main result is convergence to a bounded set rather than to zero, in the presence of persistent excitation. Finally, we note that in our simulation studies, $\dot{V}(t)$ remained negative for almost the entire period of interest in the case of constant parameters, resulting in a steady convergence of the parameter estimation error to zero.
\end{remark}
\begin{remark}
    For a scalar adaptive system (for example in Figure \ref{f:robust_comparison}), initial conditions can be easily found such that the compact set for the well-known $\sigma$-mod \cite{Ioannou_1984}, $e$-mod \cite{Narendra_1987a} is larger than the compact set for the proposed method. This can be quantified but is beyond the scope of the present paper.
\end{remark}
\begin{remark}
    The bounds for $\Omega(t)$ and $\Gamma(t)$ derived in Lemmas \ref{l:Omega} and \ref{l:Gamma} were crucial in establishing the lower bound for $\eta(t)$. That this occurs over an interval of time results in exponential and not just asymptotic properties. This has obvious implications of robustness (c.f. \cite[\S 9.1]{Khalil_2002}).
\end{remark}

\section{Numerical Simulations}
\label{s:Comparison}

In this section we present two numerical simulations. In the first, we present results for F-16 longitudinal dynamics from \cite{Stevens2003}, trimmed and linearized at a straight and level flying condition with a velocity of $500$ ft/s and an altitude of $15,000$ ft. We further include integral tracking of the command signal thus resulting in the extended model
\begin{equation*}
\scalebox{0.78}{%
$\underbrace{
\begin{bmatrix}
\dot{x}_1(t)\\
\dot{x}_2(t)\\
\dot{x}_3(t)
\end{bmatrix}
}_{\dot{x}(t)}
=
\underbrace{
\begin{bmatrix}
-0.6398&~~0.9378&0\\
-1.5679&-0.8791&0\\
0&1&0\\
\end{bmatrix}
}_{A}
\underbrace{
\begin{bmatrix}
x_1(t)\\
x_2(t)\\
x_3(t)
\end{bmatrix}
}_{x(t)}
+
\underbrace{
\begin{bmatrix}
-0.0777\\
-6.5121\\
0
\end{bmatrix}
}_{B}
u(t)
+
\underbrace{
\begin{bmatrix}
0\\
0\\
-1
\end{bmatrix}
}_{B_z}
z_{cmd}(t)$}
\end{equation*}
where $z_{cmd}$ is the pitch rate command (dps), $u$ is an elevator deflection (deg), and the state variables $x_1,x_2,x_3$ are the angle of attack (deg), pitch rate (dps), and integrated pitch rate tracking error (deg), respectively. A reference model, representing the desired dynamics, is designed as $\dot{x}_m(t)=A_mx_m(t)+B_zz_{cmd}(t)$, where $A_m=A-BK^T$ is Hurwitz, with $K=[0.1965,~-0.3835,~-1]^T$. The tracking error dynamics may be expressed in the form of \eqref{e:error_model_general}, with error $e(t)=x_m(t)-x(t)$, $e_y(t)=e(t)$, $\phi(t)=x(t)$, and the control input selected as $u(t)=-K^Tx(t)-\theta^T(t)\phi(t)$. With the parameter update argument selected as $Y(t)=-\phi(t)e^T(t)PB$ (as in Table \ref{t:Common_Structure}), Assumption \ref{a:e_e_y_V_V_dot} may be verified.

The adaptive parameter estimate $\theta$ is initialized at zero, to estimate the nominal unknown parameter $\theta^*=[0.1965,~-0.03835,~0]^T$, which represents uncertainty as a function of angle of attack and pitch rate. For the algorithm in \eqref{e:theta_dot_AR}, \eqref{e:Gamma_Update}, \eqref{e:Omega_Update}, we set $\lambda_{\Gamma}=0.5$, $\kappa=0.5$, $\lambda_{\Omega}=10$, and find a matrix $P$ which solves $A_m^TP+PA_m=-I$. The time-varying learning rate is initialized as $\Gamma(t_0)=\Gamma_0=10I$. For the projection algorithms in \eqref{e:theta_dot_AR} and \eqref{e:Gamma_Update}, we use the convex, coercive continuous function in \eqref{e:Projection_AC} where the 2-norm is used for the $\theta$ update and the Frobenius norm is used for the $\Gamma$ update.

We first present results for the case of a constant unknown parameter in order to demonstrate exponential convergence properties \emph{towards the origin} with finite excitations, and then proceed to the case when the unknown parameters are time-varying. The results in Figure \ref{sf:State_Control} demonstrate command tracking of pitch rate step responses using the standard MRAC update with a constant $\Gamma$ (as in \eqref{e:theta_dot_nom}) and the time-varying learning rate update (TR-MRAC) proposed in this paper in \eqref{e:theta_dot_AR}-\eqref{e:Omega_Update}, for the case of a constant $\theta^*$. It can be noted that while step responses contain some frequency content, the regressor $\phi(t)$ does not satisfy the PE condition in Definition \ref{d:PE} with a large $\alpha$. This is reflected in Figure \ref{sf:Lyapunov_Gamma}, as the parameter errors remain constant for the static learning rate method. However, as time proceeds, with the help of \eqref{e:Omega_Update}, the excitation content is retained and increased, leading to a change in $\Gamma(t)$ for the time-varying learning rate update, which is immediately followed by a fast decrease in $V(t)$ (note the log-scale in Figure \ref{sf:Lyapunov_Gamma}). It is this interplay between excitation, change in $\Gamma(t)$, and a large decrease in $V$ that ensures the exponential convergence of errors to a compact set.
\begin{figure}[!t]
    \begin{subfigure}[b]{\columnwidth}
        \centerline{
	    \includegraphics[trim={0.4cm 2.3cm 1.9cm 8.4cm},clip,width=0.45\columnwidth]{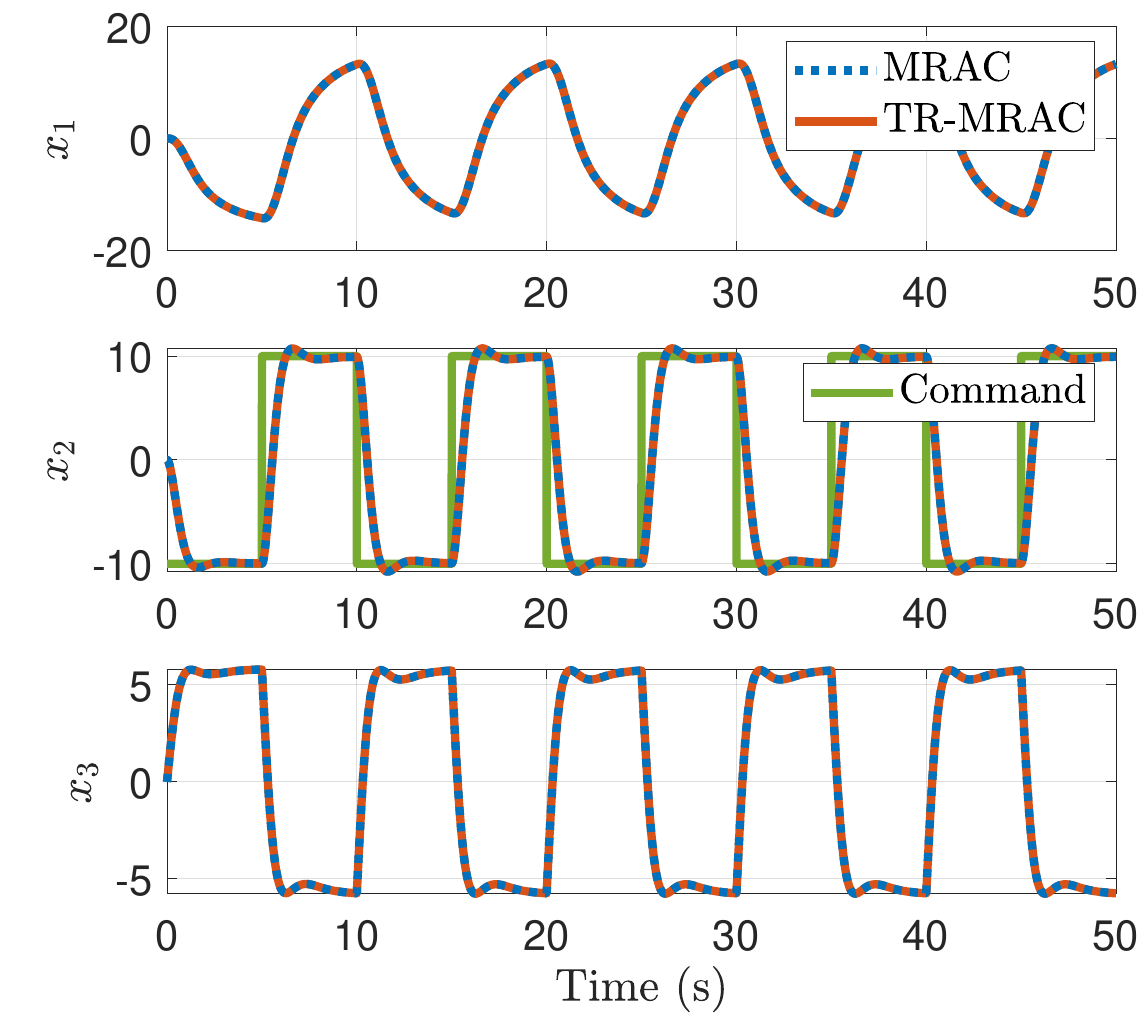}
	    \includegraphics[trim={0.4cm 2.3cm 1.9cm 8.4cm},clip,width=0.45\columnwidth]{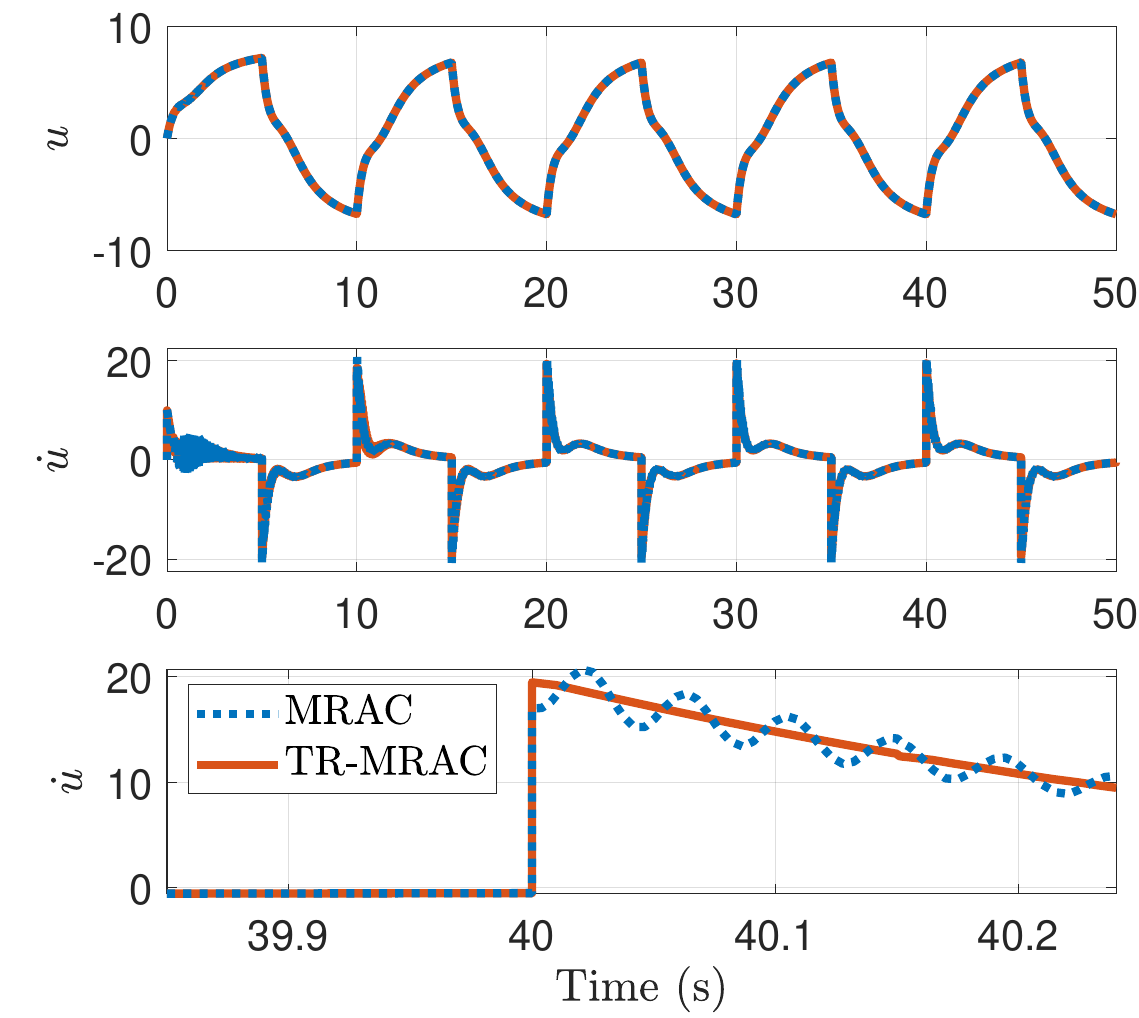}
	    }
        \caption{State and control time histories.}
        \label{sf:State_Control}
    \end{subfigure}
    
    \begin{subfigure}[b]{\columnwidth}
        \centerline{
	    \includegraphics[trim={0.4cm 2.3cm 1.9cm 8.4cm},clip,width=0.45\columnwidth]{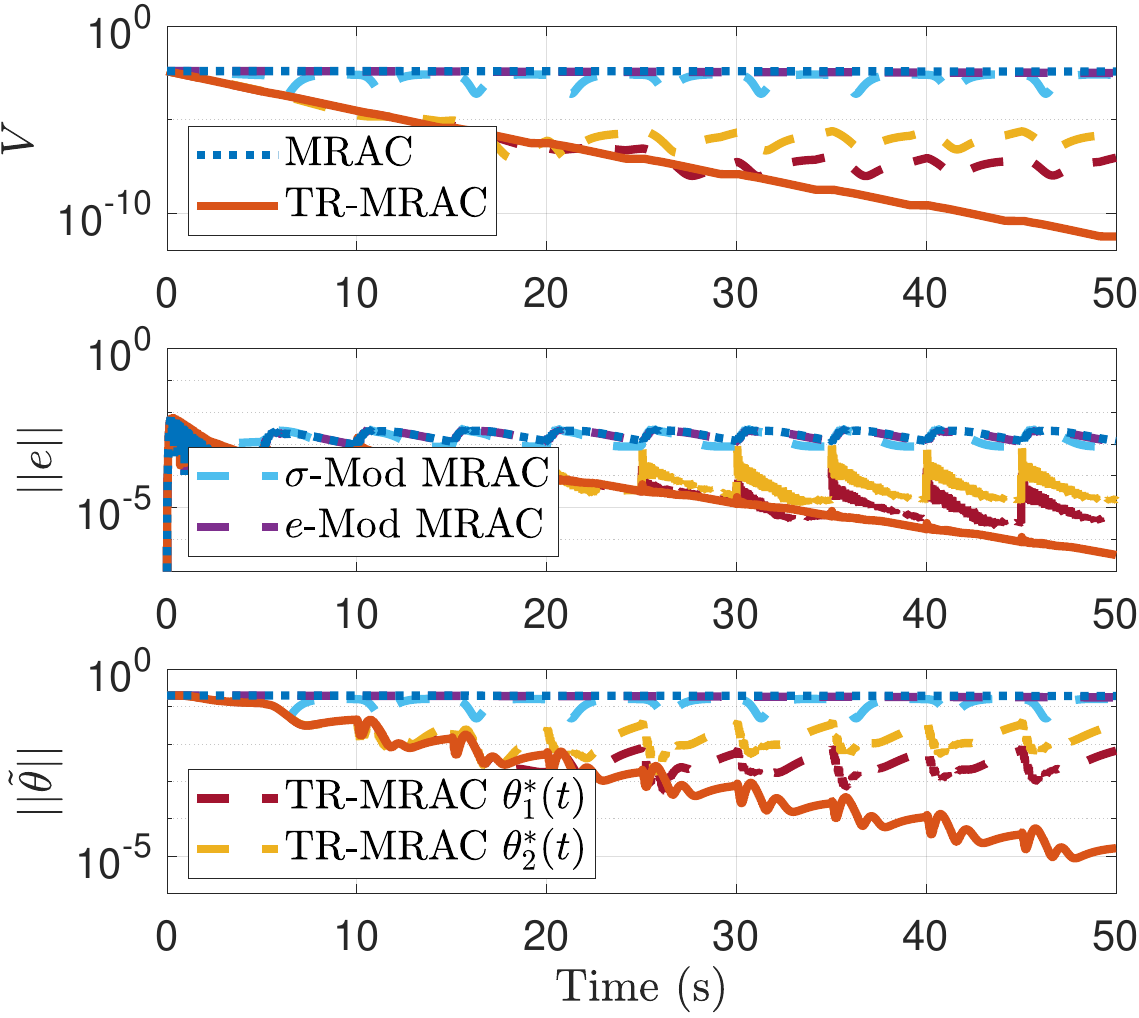}
	    \includegraphics[trim={0.4cm 2.3cm 1.9cm 8.4cm},clip,width=0.45\columnwidth]{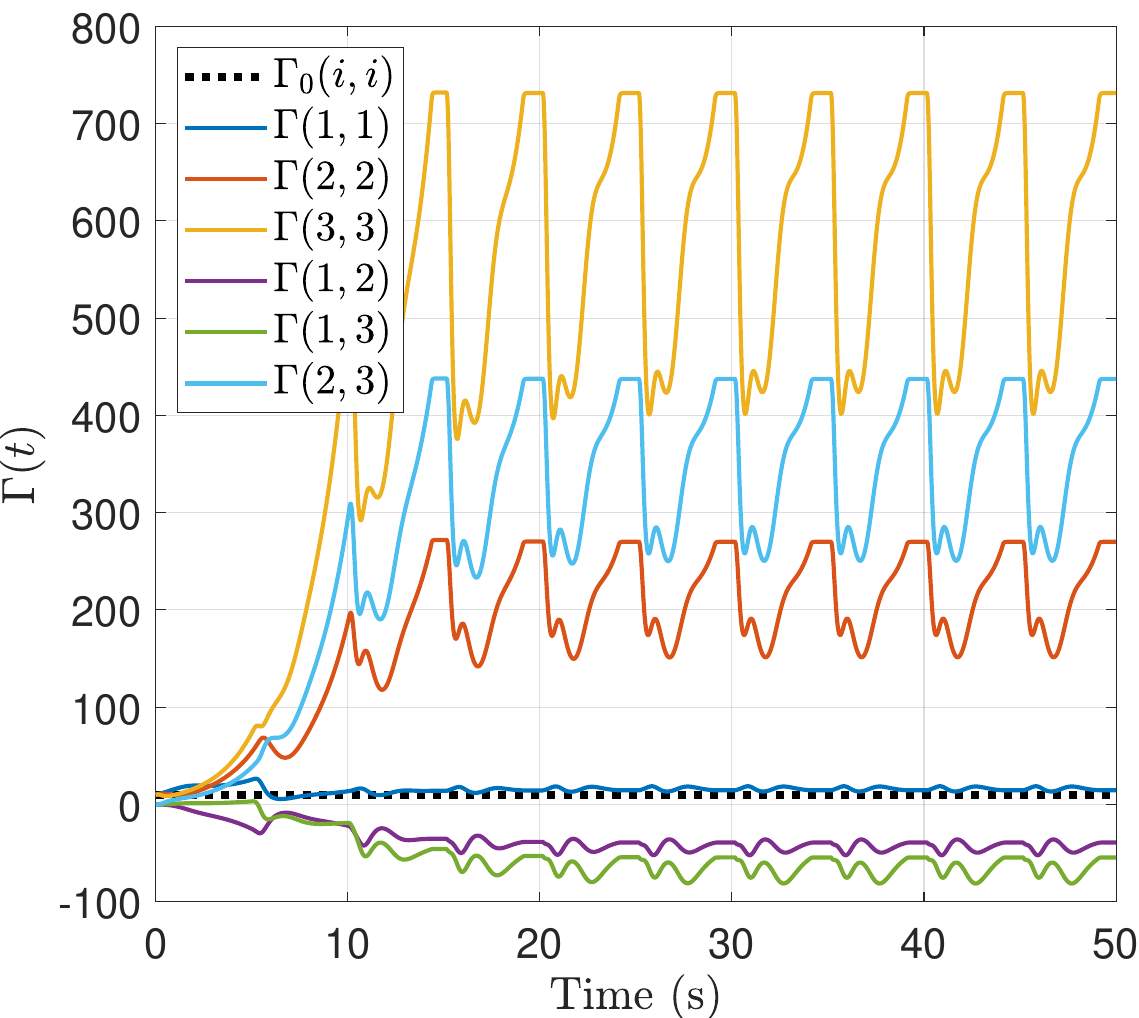}
	    }
        \caption{Lyapunov function, tracking error norm, parameter error norm and time-varying learning rate time histories.}
        \label{sf:Lyapunov_Gamma}
    \end{subfigure}
    \caption{Time histories of numerical simulation.}
    \label{f:simulation_plots}
\end{figure}
\begin{figure}[!t]
        \centering
        \begin{minipage}{0.25\columnwidth}
        \centerline{
        \includegraphics[trim={3.4cm 3.8cm 4.4cm 4.75cm},clip,width=\columnwidth]{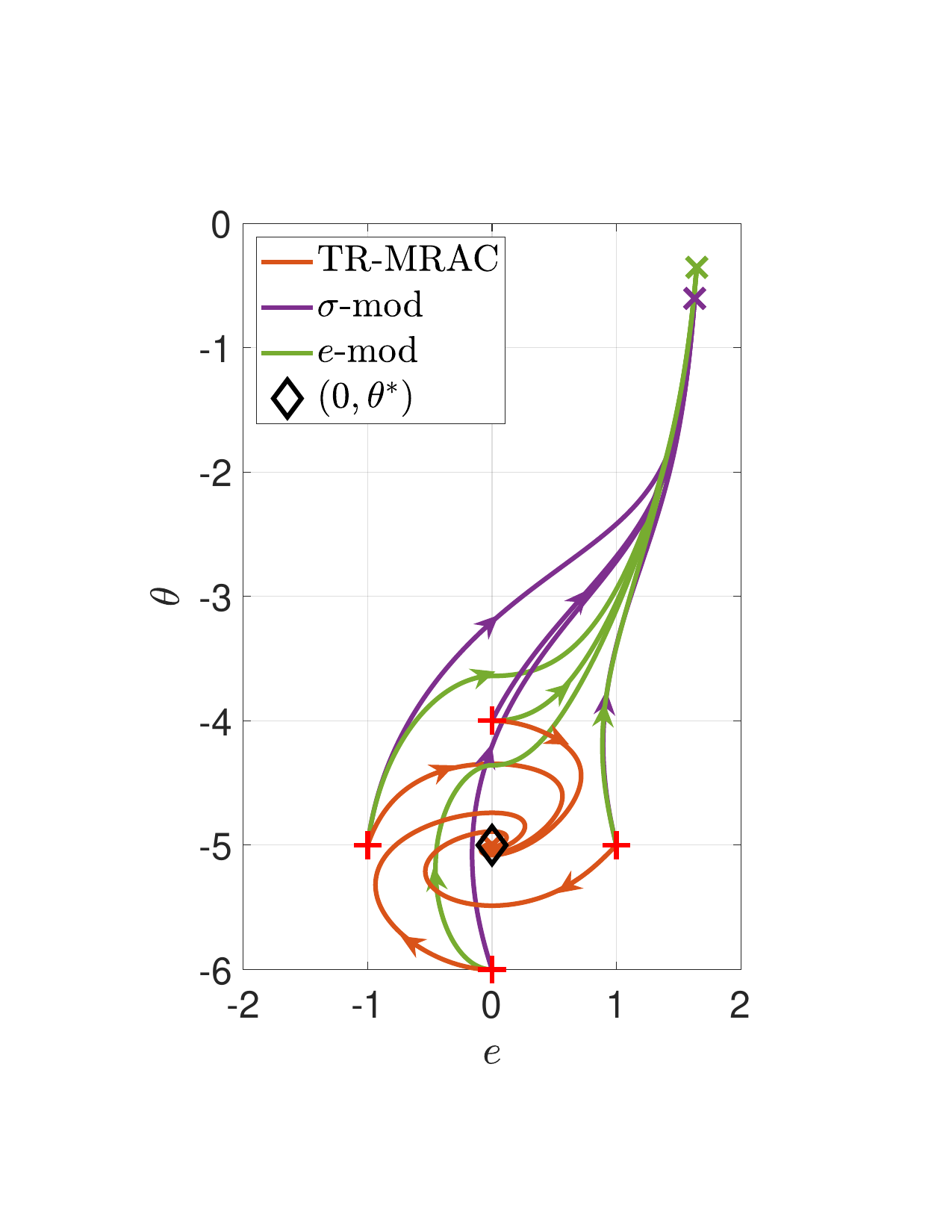}
        }
        \end{minipage}%
        \begin{minipage}{0.75\columnwidth}
        \vspace{-.4cm}
        \begin{align*}
            \text{error model}&:\dot{e}(t)=-e(t)+\tilde{\theta}(t)\phi(t)\\
            \text{TR-MRAC}&:\dot{\theta}(t)=-\gamma(t)e(t)\phi(t)\\
            \text{$\sigma$-mod}&:\dot{\theta}(t)=-e(t)\phi(t)-\theta(t)\\
            \text{$e$-mod}&:\dot{\theta}(t)=-e(t)\phi(t)-|e(t)|\theta(t)\\
            \text{regressor}&:\phi(t)=2-e(t)
        \end{align*}
        \end{minipage}
    \caption{Trajectories of a first order PE adaptive system. Initial conditions marked by red ``$+$'', equilibrium points marked by ``$\times$''.}
    \label{f:robust_comparison}
\end{figure}

Figure \ref{sf:Lyapunov_Gamma} (left) also includes results for the $\sigma$-mod \cite{Ioannou_1984}, $e$-mod \cite{Narendra_1987a} adaptive update modifications. It can be noted that these modifications do not greatly improve the error convergence rate.

We further include, in Figure \ref{sf:Lyapunov_Gamma} (left), results obtained with two sets of time-varying parameters $\theta^*_1(t)=(1+1.1t/50)\theta^*$ (in dark red) and $\theta^*_2(t)=(1+1.5t/50)\theta^*$ (in yellow). As all plots show, the errors decrease despite the time-variations. The larger time-variation results in a larger error as shown in Figure \ref{sf:Lyapunov_Gamma} (left).

As a benchmark, we consider a first-order plant example with a single unknown parameter (see \cite{Ioannou_1984} and \cite{Narendra_1987a} for details of the setup), the corresponding phase plots in the $(e,\theta)$ space of which are shown in Figure \ref{f:robust_comparison}, for the $\sigma$-mod \cite{Ioannou_1984}, $e$-mod \cite{Narendra_1987a}, and the proposed TR-MRAC which is designed with $0<\gamma_{min}\leq\gamma(t)\leq1$. Figure \ref{f:robust_comparison} illustrates that the $\sigma$-mod and $e$-mod create spurious equilibrium points to which all trajectories shown with these two algorithms converge. In contrast, TR-MRAC has a sole equilibrium point at $(0,-5)$ to which all trajectories converge.

\section{Concluding Remarks}
\label{s:Conclusion}

In this paper we presented a new parameter estimation algorithm for the adaptive control of a class of time-varying plants. The main feature of this algorithm is a matrix of time-varying learning rates, which enables exponentially fast trajectories of parameter estimation errors towards a compact set whenever excitation conditions are satisfied. It is shown that even in the presence of time-varying parameters, this algorithm guarantees global boundedness of the state and parameter errors of the system. The learning rate matrix is ensured to be bounded through the use of a projection operator. Since no filtering is employed and the original dynamic structure of the system is preserved, the bounds derived are tractable and are clearly related to the bounds on the time-variations of the unknown parameters as well as the excitation properties. Numerical simulations were provided to complement the theoretical analysis. Future work will focus on connecting these time-varying learning rates to accelerated learning in machine learning problems. 

\appendices

\section*{Proofs of Lemmas}
\label{s:Proofs_Lemmas}

\begin{proof}[Proof of Lemma \ref{l:Xi}]
    Let $x_1,x_2\in\Xi_{\delta}$ and thus $f(x_1)\leq\delta$ and $f(x_2)\leq\delta$. From the convexity of $f$, for any $0\leq\lambda\leq 1$:\\
    $f(\lambda x_1+(1-\lambda)x_2)\leq\lambda f(x_1)+(1-\lambda) f(x_2)\leq\lambda\delta+(1-\lambda)\delta=\delta$.
    Therefore for all $x=\lambda x_1+(1-\lambda)x_2$: $f(x)\leq\delta$, and thus $x\in\Xi_{\delta}$. Therefore $\Xi_{\delta}$ is a convex set.
\end{proof}
\begin{proof}[Proof of Lemma \ref{l:bounded_sublevel}]
    Suppose there exists a constant $\delta>0$ such that the subset $\Xi_{\delta}=\{x\in\mathbb{R}^N\mid f(x)\leq\delta\}$ is nonempty and unbounded. Thus there exists a sequence $\{x_k\mid k\in\mathbb{N}\}\in\Xi_{\delta}$ such that $\lVert x_k\rVert\rightarrow\infty$. From Definition \ref{d:Coercive}, given that $f$ is coercive then $\lim_{k\rightarrow\infty}f(x_k)=\infty$. This contradicts $f(x)\leq\delta,~ \forall x\in \Xi_{\delta}$.
\end{proof}
\begin{proof}[Proof of Lemma \ref{l:theta_star_theta_b}]
    $f(\theta)$ is convex, thus for any $0<\lambda\leq1$: $f(\lambda\theta^*+(1-\lambda)\theta)=f(\theta+\lambda(\theta^*-\theta))\leq f(\theta)+\lambda(f(\theta^*)-f(\theta))$. Thus $(\theta-\theta^*)^T\nabla f(\theta)=\lim_{\lambda\rightarrow0}\left(f(\theta)-f(\theta+\lambda(\theta^*-\theta))\right)/\lambda\geq f(\theta)-f(\theta^*)\geq \delta-\delta=0$.
\end{proof}
\begin{proof}[Proof of Lemma \ref{l:Trace_theta_tilde_Proj_Y}]
    For each $j\in1,\ldots,m$, if $f_j(\theta_j)>0\wedge y_j^T\Gamma\nabla f_j(\theta_j)>0$, then using \eqref{e:Projection_Vector_Gamma} and Lemma \ref{l:theta_star_theta_b},
    \begin{align*}
        &Tr\left[(\theta-\theta^*)^T\Gamma^{-1}\left(\text{Proj}_{\Gamma}(\theta,Y,F)-\Gamma Y\right)\right]\\
        &\quad=\sum_{j=1}^m(\theta_j-\theta_j^*)^T\Gamma^{-1}\left(\text{Proj}_{\Gamma}(\theta_j,y_j,f_j)-\Gamma y_j\right)\\
        &\quad=-\sum_{j=1}^m\frac{\underbrace{(\theta_j-\theta_j^*)^T\nabla f_j(\theta_j)}_{\geq0}\underbrace{(\nabla f_j(\theta_j))^T\Gamma y_j}_{>0}}{\underbrace{(\nabla f_j(\theta_j))^T\Gamma\nabla f_j(\theta_j)}_{>0}}\underbrace{f_j(\theta_j)}_{>0}\leq0
    \end{align*}
    otherwise $(\theta_j-\theta_j^*)^T\Gamma^{-1}\left(\text{Proj}_{\Gamma}(\theta_j,y_j,f_j)-\Gamma y_j\right)=0$.
\end{proof}
\begin{proof}[Proof of Lemma \ref{l:Gamma_dot}]
    It can be noticed that $0=\frac{d}{dt}\left(I\right)=\frac{d}{dt}\left(\Gamma(t)\Gamma^{-1}(t)\right)=\dot{\Gamma}(t)\Gamma^{-1}(t)+\Gamma(t)\left[\frac{d}{dt}\left(\Gamma^{-1}(t)\right)\right]$. Lemma \ref{l:Gamma_dot} follows by solving for $\frac{d}{dt}\left(\Gamma^{-1}(t)\right)$.
\end{proof}
\begin{proof}[Proof of Lemma \ref{l:Omega}]
    Let $v\in\mathbb{R}^N$. Given the initial condition for \eqref{e:Omega_Update}, it can be noted that $0\leq v^T\Omega(t_0)v\leq \lVert v\rVert^2$. Furthermore
    \begin{equation}\label{e:phi_phiT_normalized}
        0\leq\frac{\lvert v^T\phi(t)\rvert^2}{1+\lVert\phi(t)\rVert^2}\leq \lVert v\rVert^2,\quad \forall v,t\geq t_0,
    \end{equation}
    as multiplying through by $1+\lVert\phi(t)\rVert^2\geq1$, the lower and upper bounds may be shown as $0\leq \lvert v^T\phi(t)\rvert^2$, and $\lvert v^T\phi(t)\rvert^2\leq \lVert v\rVert^2\lVert \phi(t)\rVert^2 \leq\lVert v\rVert^2+\lVert v\rVert^2\lVert \phi(t)\rVert^2$, $\forall v,t\geq t_0$.
    
    1) From the integral update in \eqref{e:Omega_Update} we obtain
    \begin{align}\label{e:vTOmegav}
        \begin{split}
            v^T\Omega(t)v&=\text{e}^{-\lambda_{\Omega}(t-t_0)}v^T\Omega(t_0)v\\
            &\quad+\int_{t_0}^t\text{e}^{-\lambda_{\Omega}(t-\tau)}\lambda_{\Omega}\frac{\lvert v^T\phi(\tau)\rvert^2}{1+\lVert\phi(\tau)\rVert^2}d\tau.
        \end{split}
    \end{align}
    Given that $\lambda_{\Omega}>0$, using \eqref{e:phi_phiT_normalized}, all terms in \eqref{e:vTOmegav} are non-negative, therefore $v^T\Omega(t)v\geq0$, $\forall v,t\geq t_0$ and thus $\Omega(t)\geq0$, $\forall t\geq t_0$.
    
    2) From the integral update in equation \eqref{e:Omega_Update} we obtain $v^T(I-\Omega(t))v=v^T\left(I-\text{exp}(-\lambda_{\Omega}(t-t_0))\Omega(t_0)\right)v-\int_{t_0}^t\text{exp}(-\lambda_{\Omega}(t-\tau))\lambda_{\Omega}\lvert v^T\phi(\tau)\rvert^2/(1+\lVert\phi(\tau)\rVert^2)d\tau$, which may be bounded using $\lambda_{\Omega}>0$ and \eqref{e:phi_phiT_normalized} to result in $v^T(I-\Omega(t))v\geq 0$, $\forall v,t\geq t_0$ and thus $\Omega(t)\leq I$, $\forall t\geq t_0$.
    
    3) The left hand side of \eqref{e:vTOmegav} may be lower bounded as $v^T\Omega(t_2)v\geq\text{exp}(-\lambda_{\Omega}(t_2-t_1))\lambda_{\Omega}v^T\int_{t_1}^{t_2}(\phi(\tau)\phi^T(\tau))/(1+\lVert\phi(\tau)\rVert^2)d\tau v$. Thus using the finite excitation condition in Assumption \ref{a:FE} (see Definition \ref{d:FE}), $v^T\Omega(t_2)v\geq v^T(\lambda_{\Omega}\alpha/d)\text{exp}(-\lambda_{\Omega}(t_2-t_1))Iv\geq(k_{\Omega}/(\rho_{\Omega}\kappa\Gamma_{max}))v^TIv$. Furthermore from update \eqref{e:Omega_Update}: $v^T\Omega(t)v\geq \text{exp}(-\lambda_{\Omega}(t-t_2))v^T\Omega(t_2)v$, $\forall v,t\geq t_2$. Therefore $v^T\Omega(t)v\geq\text{exp}(-\lambda_{\Omega}(t-t_2))(k_{\Omega}/(\rho_{\Omega}\kappa\Gamma_{max}))v^TIv$, $\forall v,t\geq t_2$. Therefore $\Omega(t)\geq(k_{\Omega}/(\kappa\Gamma_{max}))I>(1/(\kappa\Gamma_{max}))$, $\forall t\in[t_2,~t_3]$.
    
    4) Immediate from extension of the proof of case 3) $\forall t\geq t_2'$.
\end{proof}

\begin{proof}[Proof of Lemma \ref{l:Gamma}]
    1) The time derivative for $\mathcal{F}(\Gamma)$ in \eqref{e:Gamma_Update} is $\dot{\mathcal{F}}(\Gamma)=Tr[(\nabla \mathcal{F}(\Gamma))^T\dot{\Gamma}]=\lambda_{\Gamma}Tr[(\text{Proj}(\Gamma,\mathcal{Y},\mathcal{F}))^T\nabla \mathcal{F}(\Gamma)]$. With the projection equation in \eqref{e:Projection_Matrix_PD},
    \begin{equation*}
        \scalebox{0.88}{%
        $\dot{\mathcal{F}}(\Gamma)=
        \left\{\begin{array}{ll}
        \hspace{-.15cm}\lambda_{\Gamma}Tr[\mathcal{Y}^T\nabla \mathcal{F}(\Gamma)](1-\mathcal{F}(\Gamma)), & \hspace{-.25cm}\mathcal{F}(\Gamma)>0\wedge Tr[\mathcal{Y}^T\nabla \mathcal{F}(\Gamma)]>0\\
        \hspace{-.15cm}\lambda_{\Gamma}Tr[\mathcal{Y}^T\nabla \mathcal{F}(\Gamma)], & \hspace{-.25cm}\text{otherwise}
        \end{array}\right.$}
    \end{equation*}
    Therefore within the limiting region $\mathcal{F}(\Gamma)>0$,
    \begin{equation}\label{e:Limiting_F_Gamma}
        \left\{\begin{array}{ll}
        \hspace{-.15cm}\dot{\mathcal{F}}(\Gamma)>0, & 0<\mathcal{F}(\Gamma)<1\wedge Tr[\mathcal{Y}^T\nabla \mathcal{F}(\Gamma)]>0\\
        \hspace{-.15cm}\dot{\mathcal{F}}(\Gamma)=0, & \mathcal{F}(\Gamma)=1\wedge Tr[\mathcal{Y}^T\nabla \mathcal{F}(\Gamma)]>0\\
        \hspace{-.15cm}\dot{\mathcal{F}}(\Gamma)\leq0, & Tr[\mathcal{Y}^T\nabla \mathcal{F}(\Gamma)]\leq0
        \end{array}\right.
    \end{equation}
    Thus given the initial condition for \eqref{e:Gamma_Update}, $\mathcal{F}(\Gamma(t_0))\leq1$ and therefore $\mathcal{F}(\Gamma(t))\leq 1$ for all $t\geq t_0$. Therefore from Lemma \ref{l:bounded_sublevel}, there exists a constant denoted $\Gamma_{max}$ such that $\lVert\Gamma\rVert\leq\Gamma_{max}$ for all $\Gamma\in\Upsilon_1$, i.e. all $\mathcal{F}(\Gamma)\leq1$, $\forall t\geq t_0$, which implies $\Gamma(t)\leq\Gamma_{max}I$, $\forall t\geq t_0$.
    
    2) From \eqref{e:Limiting_F_Gamma} and Remark \ref{r:rho}, $\rho(t)\in[0,1]$, $\forall t\geq t_0$.
    
    From \eqref{e:Projection_Matrix_PD}, \eqref{e:Gamma_Update}, Remark \ref{r:rho} and Lemma \ref{l:Gamma_dot}, the inverse of the time-varying learning rate may be expressed as
    \begin{equation}\label{e:Gamma_inv_t}
        \frac{d}{dt}\left(\Gamma^{-1}(t)\right)=-\lambda_{\Gamma}\rho(t)\Gamma^{-1}(t)+\lambda_{\Gamma}\rho(t)\kappa\Omega(t).
    \end{equation}
    Let $v\in\mathbb{R}^N$. From \eqref{e:Gamma_inv_t} we obtain
    \begin{align}\label{e:Gamma_Inverse}
        \begin{split}
            v^T\Gamma^{-1}(t)v&=\text{e}^{-\lambda_{\Gamma}\int_{t_0}^t\rho(\tau)d\tau}v^T\Gamma^{-1}(t_0)v\\
            &\quad+\int_{t_0}^t\text{e}^{-\lambda_{\Gamma}\int_{\tau}^t\rho(\nu)d\nu}\lambda_{\Gamma}\rho(\tau)\kappa v^T\Omega(\tau)vd\tau.
        \end{split}
    \end{align}
    
    3) Using $\Omega(t)\leq I$, $\forall t\geq t_0$ from Lemma \ref{l:Omega} and $\rho(t)\in[0,1]$, $\forall t\geq t_0$, with equation \eqref{e:Gamma_Inverse} we obtain $v^T\left(\Gamma_{min}^{-1}I-\Gamma^{-1}(t)\right)v\geq0$, $\forall v,t\geq t_0$ and thus $\Gamma(t)\geq\Gamma_{min}I>0$, $\forall t\geq t_0$.
    
    4) From the excitation lower bound: $\Omega(t)\geq(k_{\Omega}/(\kappa\Gamma_{max}))I>(1/(\kappa\Gamma_{max}))I$, $\forall t\in[t_2,~t_3]$ from Lemma \ref{l:Omega}, it can be noted with equation \eqref{e:Gamma_Update} that $\Gamma_{max}-\kappa\Gamma_{max}\Omega(t)\Gamma_{max}\leq-\Gamma_{max}(k_{\Omega}-1)<0$, $\forall t\in[t_2,~t_3]$, as $k_{\Omega}>1$. Thus even if $\Gamma(t)$ were to be at the limit $\Gamma_{max}$, the evolution of $\Gamma(t)$ is bounded away from $\Gamma_{max}$ (towards $\Gamma_{max}/k_{\Omega}$). Therefore there exists a $\rho_{t_3}\in(0,1)$, such that $\rho(t)\geq\rho_{t_3}$, $\forall t\in[t_2,~t_3]$, and a $\Gamma_{t_3}<\Gamma_{max}$, such that $\Gamma(t)\leq\Gamma_{t_3}$, $\forall t\in[t_2,~t_3]$. Furthermore from \eqref{e:Gamma_Inverse}: $v^T\Gamma^{-1}(t)v\geq\text{exp}(-\lambda_{\Gamma}\int_{t_3}^t\rho(\tau)d\tau)v^T\Gamma^{-1}(t_3)v$, $\forall v,t\geq t_3$. Thus $v^T\Gamma^{-1}(t)v\geq\text{exp}(-\lambda_{\Gamma}(t-t_3))\Gamma^{-1}_{t_3}v^TIv$, $\forall v,t\geq t_3$. Therefore $\Gamma^{-1}(t)\geq \Gamma_{FE}^{-1}I$, $\forall t\in[t_3,~t_4]$, thus $\Gamma(t)\leq \Gamma_{FE}I<\Gamma_{max}$, $\forall t\in[t_3,~t_4]$.
    
    5) Given that $\Gamma(t)\leq \Gamma_{FE}I<\Gamma_{max}$, $\forall t\in[t_3,~t_4]$, it can be seen that $\exists \rho_0\in(0,1]$ such that $\rho(t)\geq\rho_0>0$, $\forall t\in[t_3,~t_4]$.
    
    6) Immediate from extension of the proof of case 4) $\forall t\geq t_3'$.
    
    7) Immediate from extension of the proof of case 5) $\forall t\geq t_3'$.
\end{proof}
\begin{proof}[Proof of Lemma \ref{l:theta_t}]
    For each $j\in1,\ldots,m$, the time derivative for $f_j(\theta_j)$ in \eqref{e:theta_dot_AR} may be expressed as $\dot{f}_j(\theta_j)=(\nabla f_j(\theta_j))^T\dot{\theta}_j=\left(\text{Proj}_{\Gamma}\left(\theta_j,y_j,f_j\right)\right)^T\nabla f_j(\theta_j)$. With the projection equation in \eqref{e:Projection_Vector_Gamma},
    \begin{equation*}
    \scalebox{0.90}{%
        $\dot{f}_j(\theta_j)=
        \left\{\begin{array}{ll}
        \hspace{-.15cm}y_j^T\Gamma\nabla f_j(\theta_j)(1-f_j(\theta_j)), & f_j(\theta_j)>0\wedge y_j^T\Gamma\nabla f_j(\theta_j)>0\\
        \hspace{-.15cm}y_j^T\Gamma\nabla f_j(\theta_j), & \text{otherwise}
        \end{array}\right.$}
    \end{equation*}
    Therefore within the limiting region $f_j(\theta_j)>0$,
    \begin{equation*}
        \left\{\begin{array}{ll}
        \hspace{-.15cm}\dot{f}_j(\theta_j)>0, & 0<f_j(\theta_j)<1\wedge y_j^T\Gamma\nabla f_j(\theta_j)>0\\
        \hspace{-.15cm}\dot{f}_j(\theta_j)=0, & f_j(\theta_j)=1\wedge y_j^T\Gamma\nabla f_j(\theta_j)>0\\
        \hspace{-.15cm}\dot{f}_j(\theta_j)\leq0, & y_j^T\Gamma\nabla f_j(\theta_j)\leq0
        \end{array}\right.
    \end{equation*}
    Thus given the initial condition for \eqref{e:theta_dot_AR}, $f_j(\theta_j(t_0))\leq1$ and therefore $f_j(\theta_j(t))\leq 1$ for all $t\geq t_0$. Therefore from Lemma \ref{l:bounded_sublevel}, there exists constants denoted $\theta_{j,max}$ such that $\lVert\theta_j(t)\rVert\leq\theta_{j,max}$ for all $\theta_j(t)\in\Xi_{1,j}$, i.e. all $f_j(\theta_j)\leq1$, $\forall t\geq t_0$, as proven. Thus there exists a constant denoted $\theta_{max}$ such that $\lVert\theta(t)\rVert\leq\theta_{max}$, $\forall t\geq t_0$.
\end{proof}

\begin{table}[t]
\caption{Core variable nomenclature}
\centering
\begin{tabular}{cc|cc}
\toprule
Learning Rate & $\Gamma$ & Projection Scalar & $\rho$ \\
Information Matrix & $\Omega$ & $\Gamma$-Projection Operator & $\text{Proj}_{\Gamma}$ \\
Tracking Error & $e$ & User-Defined Gains & $\lambda_{\Gamma},\kappa,\lambda_{\Omega}$ \\
Parameter Estimation Error & $\tilde{\theta}$ & Excitation Level & $\alpha_0$ \\
Parameter Estimate & $\theta$ & Compact set & $D$ \\
True Unknown Parameter & $\theta^*$ & Compact Set Scalar & $\upsilon$ \\
Regressor & $\phi$ & Convergence Rate & $\eta$ \\
\bottomrule
\end{tabular}
\label{t:Nomenclature}
\end{table}

\bibliographystyle{IEEEtran}
\bibliography{IEEEabrv,References}

\end{document}